\declaretheoremstyle[headfont=\normalfont]{normalhead}
\newcommand{\dist}{\mathrm{dist}_Z}
\newcommand\extrafootertext[1]{%
	\bgroup
	\renewcommand\thefootnote{\fnsymbol{footnote}}%
	\renewcommand\thempfootnote{\fnsymbol{mpfootnote}}%
	\footnotetext[0]{#1}%
	\egroup
}
\newcommand{\DeltaVAE}{$\Delta\!$VAE}
\title{Small time asymptotics of the entropy of the heat kernel on a Riemannian manifold}
\author[$\dagger$,$\star$]{Vlado Menkovski}
\author[$\dagger$,$\star$]{Jacobus W. Portegies}
\author[$\dagger$]{Mahefa Ratsisetraina Ravelonanosy}
\affil[$\dagger$]{\small Department of Mathematics and Computer Science\\ Eindhoven University of Technology}
\affil[$\star$]{\small EAISI, Eindhoven University of Technology}
	\theoremstyle{plain}
	\newtheorem{defn}[subsection]{Definition}
	\newtheorem{theorem}[subsection]{Theorem}
	\newtheorem{prop}[subsection]{Proposition}
	\newtheorem{lem}[subsection]{Lemma}
	\newtheorem{remark}[subsubsection]{Important remark}
\begin{document}
	\maketitle

	\begin{abstract}{}
\noindent
\hskip .1in
\textbf{Abstract.}
 We give an asymptotic expansion of the relative entropy between the heat kernel $q_Z(t,z,w)$ of a compact Riemannian manifold $Z$ and the normalized Riemannian volume for small values of $t$ and for a fixed element $z\in Z$. We prove that coefficients in the expansion can be expressed as universal polynomials in the components of the curvature tensor and its covariant derivatives at $z$, when they are expressed in terms of normal coordinates. We describe a method to compute the coefficients, and we use the method to compute the first three coefficients. The asymptotic expansion is necessary for an unsupervised machine-learning algorithm called the Diffusion Variational Autoencoder.
\end{abstract}

	\section{Introduction}
\extrafootertext{This work was supported by NWO GROOT project UNRAVEL, OCENW.GROOT.2019.044.}
In this article, we give a small-time asymptotic expansion of the relative entropy (also called the Kullback-Leibler divergence)
\begin{equation}\label{first_foremost_equation}
D_{KL}\left( q_Z(t,z,\cdot)\vert \vert \vartheta\right):=\int_Z q_Z(t,z,w)\log\left(q_Z(t,z,w)\right)d\mathrm{Vol}(w) + \log[\mathrm{Vol}(Z)]
\end{equation}
of the heat kernel $q_Z(t,z,\cdot)$ of a $d$-dimensional closed Riemannian manifold $Z$ with respect to the normalized Riemannian volume $\vartheta$. 

More precisely, our main result (Theorem \ref{Higher-expansion}) is that for every $n \in \mathbb{N}$ and $z \in Z$ we have the following $n$th order asymptotic expansion for small $t$
$$D_{KL}\left( q_Z(t,z,.)\vert \vert \vartheta\right) = -\frac{d}{2}\log(2\pi t) + \log[\mathrm{Vol}(Z)] +\sum_{i=0}^n c_i(z)t^i + o(t^n),$$
where the the coefficients $c_i(z)\; (i=1,2,...,n)$ can be expressed as universal polynomials in the components of the curvature tensor and its covariant derivatives at $z$ (assuming these components are written down in normal coordinates at $z$). Moreover, we describe a method to compute the coefficients $c_i(z)$ for arbitrary $i$ and we compute $c_0(z), c_1(z)$ and $c_2(z)$.

Our main theorem together with the computation of $c_0(z)$ and $c_1(z)$ proves Theorem \ref{entropy}, which essentially states that
\[
D_{KL}(q_Z(t, z, \cdot) \| \vartheta) = - \frac{d}{2} \log(2 \pi t) + \log [\mathrm{Vol}(Z)] - \frac{d}{2} + \frac{1}{4} \mathsf{Sc}(z) t + o(t).
\]
where $\mathsf{Sc}(z)$ is the scalar curvature of the manifold in the point $z \in Z$. We have mentioned Theorem \ref{entropy} in earlier work \cite[Proposition 1]{ijcai2020p375}, but here we follow up with a rigorous proof.

In order to prove these results, we use the so-called parametrix expansion, which is a classical approach to construct the fundamental solution to a partial differential equation equation. Hadamard \cite{hadamard1932probleme} used a parametrix expansion to construct a fundamental solution for the wave equation. Analogous to this construction, Minakshisundaram and Pleijel \cite{minakshisundaram1949some} derived the following parametrix expansion of the heat kernel for a closed Riemannian manifold

\begin{equation}\label{Embeddding in mathematical literature}
	q_Z(t,z,w) \approx \frac{1}{(2\pi t)^{\frac{d}{2}}}\exp\left(-\frac{\dist(w, z)^2}{2t}\right)\left[\sum_{i=0}^{n}u_i(z,w)t^i \right].
\end{equation}
for small $t$ and smooth functions $u_i$ defined on a neighborhood of the diagonal of $Z \times Z$. 
The expansion in Equation (\ref{Embeddding in mathematical literature}) was derived to study the analytic continuation in the s-plane of the Dirichlet series that corresponds to the spectra of the Laplace-Beltrami-Operator.
More generally, the coefficients of a parametrix expansion are Riemannian invariant and encode the relationship between the topology, the geometry and the spectra of a differential operator on the Riemannian manifold  \cite{rosenberg1997laplacian}, \cite{berger1971spectre}.

For our results we need rather strong rigorous error estimates for the approximation (\ref{Embeddding in mathematical literature}). In Theorem \ref{th:kannai}, we record one such estimate that essentially goes back to \cite{kannai1977off}. Similar estimates were obtained by Ben Arous \cite{ben1988developpement} for the more general case of fundamental solution of a hypoelliptic heat equation.

\subsection{Motivation: The Diffusion Variational Autoencoder}
Our mathematical result is needed in a machine-learning algorithm called the Diffusion Variational Autoencoder (\DeltaVAE), which is a variant of a Variational Autoencoder (VAE) that allows for a closed manifold as latent space, rather than the usual Euclidean latent space.

A Variational Autoencoder \cite{kingma2013auto} \cite{kingma2019introduction}  \cite{rezende2014stochastic} is an unsupervised machine-learning algorithm that aims to find a representation of a dataset in a lower-dimensional space. The lower-dimensional space is called the \emph{latent space}. A Variational Autoencoder is built up of two parts, an encoder, which maps data points to points in latent space, and a decoder which maps points in latent space to back to points in data space. Both parts are usually implemented as neural networks and are optimized according to some loss function, which stimulates the composition of encoder and decoder to approximate the identity map.

Because the latent space is usually a relatively low-dimensional latent space, much lower than the data space, a VAE is often thought to compress the data. The heuristic is that if a VAE finds a good compression, then the position of an encoding in latent space contains useful, relevant or even interpretable information about the dataset. Sometimes this is expressed by saying that a VAE manages to \emph{disentangle latent factors} \cite{higgins2016beta,higgins2018towards}. Although it is still unclear what is precisely meant by that, there are attempts to define disentanglement by Higgins et al \cite{higgins2018towards} and to measure disentanglement by P\'erez Rey et al \cite{rey2020metric}.

There are several variants of the VAE that are specifically meant to improve this disentanglement of latent factors. Higgins et al \cite{higgins2016beta} and Burgess et al \cite{burgess2018understanding} introduce the $\beta$-VAE which add an adjustable hyperpamarameter $\beta$ in the loss function of the VAE in order to balance latent channel capacity and independence constraints with reconstruction accuracy. The factorVAE \cite{kim2018disentangling} is a modification of the $\beta$-VAE and disentangles in the way that is forces the distribution of the representation to be factorial in order to have independence in each dimension.

Yet sometimes, a VAE with a standard Euclidean space is fundamentally unable to capture aspects of the dataset that a human observer might view as important latent factors. This is for instance the case in the idealized example when the dataset consists of pictures of a single object, rotated over different angles around one axis. To a human observer, the angle of rotation may be viewed as an important factor. However, when a VAE with one-dimensional latent space is trained on this dataset, either pictures of the object that are very similar end up in very different parts of latent space, or otherwise two nearby sampled points on the latent space can give two far away points or even two meaningless points in the dataspace when decoded \cite{falorsi2018explorations}. This phenomenon was called manifold mismatch by Davidson et al \cite{davidson2018hyperspherical}. 

To overcome the problem of manifold mismatch, one could choose a latent space that is homeomorphic to the topological structure of the data \cite{falorsi2018explorations}. In the case of the example, the latent space could be chosen to be a circle.

Several variants of the VAE have been developed that allow for various topological spaces as latent space. Xu and Durrett \cite{xu2018spherical} and Davidson et al \cite{davidson2018hyperspherical} developed VAEs that allowed for spherical latent spaces, and Falorsi et al \cite{falorsi2018explorations} designed a VAE which allowed for Lie groups as latent space. In order to allow for general closed manifolds as latent space, Pérez Rey et al \cite{ijcai2020p375} developed the Diffusion Variational Autoencoder (\DeltaVAE). There are also approaches that focus on learning a different metric on a space otherwise homeomorphic to $\mathbb{R}^n$, such as the ones by Kalatzis et al \cite{kalatzis2020variational} and by Chadebec
et al \cite{chadebec2020geometry}.

The asymptotic expansion derived in this article is important for the computational efficiency of the \DeltaVAE: The relative entropy is one term in the loss function for the \DeltaVAE, and the asymptotic expansion provides a means to approximate it quickly. This article provides rigorous argumentation for the correctness of this approximation.

\subsection{Organization of the work}
This work is organized as follows. In Section \ref{section2}, we set some notations and review all preliminary results that will be needed in the proof of the main result and the computation of the coefficients $c_i(z)$. In Section \ref{section4}, we prove the existence of an expansion up any order for the entropy, and that the expansion is such that the coefficients are universal polynomials in the component of the curvature tensor and its covariant derivatives at $z$ for $z\in Z$ fixed. Moreover, we express the coefficients $c_i(z)$ in this asymptotic expansion in terms of Gaussian integrals. In Section \ref{lastsection}, we outline a general procedure to compute the Taylor series at $0$ of the square root of the determinant  $\sqrt{\mathrm{det}(g(y))}$ of the Riemannian metric $g$, and the Taylor series at $0$ of the functions $u_i(0,y)$ $(i=0,1,2,\dots,n)$ that appear in the parametrix expansion. We use the lowest order terms of these Taylor series that are already recorded in \cite{sakai1971eigen} to compute the coefficients $c_0(z), c_1(z)$ and $c_2(z)$ of Theorem \ref{Higher-expansion}. The computation of $c_0(z)$ and $c_1(z)$ rigorously recovers the first-order expansion mentioned in \cite[Proposition 1]{ijcai2020p375}.
	\section{Notations and preliminary results}\label{section2}

We first introduce notation for the heat kernel on a Riemannian manifold. We denote by $(Z,g)$ a $d$-dimensional closed and connected Riemannian manifold equipped with a Riemannian metric $g$. We let $q_Z : (0,\infty) \times Z \times Z \to \mathbb{R}$ denote the heat kernel of $Z$, i.e the fundamental solution of the heat equation ~\cite[Equation (1.24)]{rosenberg1997laplacian}

\begin{equation}\label{heatequation}
\begin{cases}
		\partial_t q_Z(t,z,w) = \frac{1}{2}\;\Delta_z q_Z(t,z,w) &\text{for } t\geq 0 \;\mathrm{and }\; z,w\in Z\\
		\lim_{t \downarrow 0} \int q_Z(0,z,w) f(w) d \mathrm{Vol}(w) = f(z)&\text{for } z\in Z \text{ and all } f \in C(Z),
\end{cases}
\end{equation}
where $\mathrm{Vol}$ denotes the standard Riemannian volume measure on $Z$. For fixed $t$ and $z$, the function $w\in Z\longmapsto q_Z(t,z,w)$ is the density with respect to the standard Riemannian volume measure of a probability measure on $Z$.

Next, we will give an expression for the KL-divergence between the kernel $q_Z(t, z, \cdot)$ and the normalized Riemannian volume measure $\vartheta$. The measure $\vartheta$ has density $1/\mathrm{Vol}(Z)$ with respect to the standard Riemannian volume measure $\mathrm{Vol}(\cdot)$.
The KL-divergence can therefore be written as
\begin{equation}\label{entropy_formula}
	\begin{split}
D_{KL}(q_Z(t,z,\cdot)\vert\vert \vartheta)&:= \int_{Z}q_Z(t,z,w)\log \left[\mathrm{Vol}(Z)\;q_Z(t,z,w)\right] d\mathrm{Vol}(w)\\
&= \int_{Z}q_Z(t,z,w)\log \left[\;q_Z(t,z,w)\right] d\mathrm{Vol}(w) + \log[\mathrm{Vol(Z)}]
	\end{split}
\end{equation}
In the article, we are going to give asymptotic expansions of this expression for small values of $t$.


The following lemma allows us to reduce the domain of integration.
\begin{lem}\label{exponential_estimate}
Fix $z\in Z$ and let $\epsilon>0$. Then for any positive integer $n$,
$$\int_Zq_Z(t,z,w)\log[q_Z(t,z,w)]d\mathrm{Vol}(w) = \int_{B_{\epsilon}(z)}q_Z(t,z,w)\log[q_Z(t,z,w)]\;d\mathrm{Vol}(w) +o(t^n),$$
where $$B_{\epsilon}(z):= \{w\in Z:\; \dist(z,w)<\epsilon\}$$ is the ball of radius $\epsilon$ centered at $z$ given by the distance $\dist$ induced by the Riemannian metric $g$.
\end{lem}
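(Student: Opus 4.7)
The plan is to split the integral as
\[
\int_Z q_Z(t,z,w) \log q_Z(t,z,w)\, d\mathrm{Vol}(w) = \int_{B_\epsilon(z)} + \int_{Z\setminus B_\epsilon(z)},
\]
and show that the second piece is $o(t^n)$ for every $n \in \mathbb{N}$. The key input is a classical off-diagonal Gaussian upper bound for the heat kernel on a closed Riemannian manifold: there exist constants $C_1, c_1 > 0$ (depending only on $(Z,g)$) such that
\[
q_Z(t,z,w) \leq \frac{C_1}{t^{d/2}} \exp\!\left(-\frac{\dist(z,w)^2}{C_1\, t}\right), \qquad 0 < t \leq 1,
\]
uniformly in $z,w\in Z$; this is standard and can, for instance, be extracted from the parametrix expansion recalled in the introduction together with the maximum principle. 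For $w$ outside $B_\epsilon(z)$ this specializes to the crude bound $q_Z(t,z,w) \leq C_1 t^{-d/2} e^{-\epsilon^2/(C_1 t)}$, which decays faster than any polynomial in $t$ as $t \downarrow 0$.

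Next I would control the logarithmic factor. Since $q_Z(t,z,w)$ is smooth and strictly positive on the compact set $(Z\setminus B_\epsilon(z))$ (for each $t > 0$), the above bound gives, after taking logarithms,
\[
\bigl|\log q_Z(t,z,w)\bigr| \leq \frac{\dist(z,w)^2}{C_1 t} + \frac{d}{2}|\log t| + |\log C_1|,
\]
provided one also has a matching lower bound of similar form, which again is a standard Gaussian lower bound for the heat kernel. Using that $\dist(z,w) \leq \mathrm{diam}(Z)$ on the compact manifold, $|\log q_Z(t,z,w)|$ grows at most like $1/t$ (with $t$-independent coefficients) on $Z\setminus B_\epsilon(z)$.

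Multiplying and integrating then yields
\[
\left|\int_{Z\setminus B_\epsilon(z)} q_Z(t,z,w)\log q_Z(t,z,w)\, d\mathrm{Vol}(w)\right| \leq \mathrm{Vol}(Z)\, \frac{C_1}{t^{d/2}} e^{-\epsilon^2/(C_1 t)} \left(\frac{C}{t} + C'|\log t|\right),
\]
and the exponential factor kills every polynomial in $1/t$, giving $o(t^n)$ for all $n$. The main obstacle is simply recording the two-sided Gaussian bound with explicit uniform constants on a compact manifold; once that is in place the rest of the argument is just the observation that $x\mapsto x\log x$ is small when $x$ is exponentially small. If one prefers to avoid invoking a lower bound, one can instead estimate $|q_Z \log q_Z|$ directly by noting that the function $s\mapsto -s\log s$ is bounded on $[0,1]$ and then using the upper bound alone together with the smoothness of $q_Z$ for $t$ bounded away from $0$, which gives the same conclusion.
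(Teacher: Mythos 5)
Your proposal is correct and follows the same skeleton as the paper: split off $Z\setminus B_\epsilon(z)$, invoke an off-diagonal Gaussian upper bound $q_Z(t,z,w)\leq C t^{-\alpha}e^{-\dist(z,w)^2/(Ct)}$ (the paper cites Ludewig with $\alpha=d+1$ rather than $d/2$, which is immaterial), and let the factor $e^{-\epsilon^2/(Ct)}$ annihilate every power of $1/t$. The one place you diverge is the treatment of $\log q_Z$: your primary route bounds $|\log q_Z|$ via a \emph{two-sided} Gaussian estimate, which is standard (Cheeger--Yau, Li--Yau) but is an extra input the paper does not need. The paper instead observes that $x\mapsto x\log x$ is decreasing on $(0,e^{-1}]$ and tends to $0$ at $0$, so from $0\leq q_Z\leq e^{-c/t}$ alone one gets $-e^{-c/t}\,c/t\leq q_Z\log q_Z\leq 0$; the upper bound on $q_Z$ is transferred directly to $q_Z\log q_Z$ with no lower bound on the kernel. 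Your closing remark gestures at this upper-bound-only route but misstates it: boundedness of $s\mapsto -s\log s$ on $[0,1]$ only yields an $O(1)$ estimate for the integrand, which is not $o(t^n)$; what is actually needed is the monotonicity of $x\log x$ near $0$ (or equivalently the elementary inequality $-x\log x\leq -Q\log Q$ for $0<x\leq Q\leq e^{-1}$). With either your two-sided bound or this monotonicity fix, the argument goes through.
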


\begin{proof}
Let $z$ be an element of $Z$, and let $\epsilon > 0$. Since we consider only small values of $t$, then there exists a constant $C>0$ such that $$q_Z(t,z,w)\leq C t^{-(d+1)}e^{-\frac{\dist(z,w)^2}{4t}}$$
\cite[Theorem 3.5 and Remark 3.6 with $j=m=l=0$ and $n=d$ is the dimension of $Z$]{ludewig2019strong}. It turns out that 
$$q_Z(t,z,w)\leq C t^{-(d+1)}e^{-\frac{\epsilon^2}{4t}}$$ if $\dist(z,w)\geq \epsilon.$

As a result, there exists a positive constant $c=c(z,{\epsilon})$ independent of $w$ such that
\begin{equation}\label{exp_decay_kannai}
0\leq q_Z(t,z,w)\leq e^{-\frac{c}{t}}
\end{equation}
for all $w \in Z \setminus B_{\epsilon}(z)$ and for small values of $t$.

Now, let $w\in Z \setminus B_{\epsilon}(z)$. 
Since the function
$$x\in(0,e^{-1}] \longmapsto x\log(x)$$ is a decreasing function converging to $0$ when $x$ converges to $0$, we have for small and positive values of $t$
 $$-e^{-\frac{c}{t}}\frac{c}{t} \leq q_Z(t,z,w)\log[q_Z(t,z,w)]\leq 0.$$
 
 Since $c$ does not depend on $w$, we have that
 $$0\geq \int_{B_{\epsilon}^c(z)}q_Z(t,z,w)\log[q_Z(t,z,w)]d\mathrm{Vol}(w)\geq  -\;e^{-\frac{c}{t}}\;\frac{c}{t}\;\mathrm{Vol}(Z).$$
Since the volume of $Z$ is finite, then the lower bound $$-\;e^{-\frac{c}{t}}\;\frac{c}{t}\;\mathrm{Vol}(Z)$$ converges to $0$ faster than $t^n$ for any fixed positive integer $n$ when $t$ goes to $0$. Therefore
\begin{align*}
\int_Z q_Z\log(q_Z)d\mathrm{Vol}(w) &= \int_{B_{\epsilon}(z)} q_Z\log(q_Z)d\mathrm{Vol}(w) + \int_{B_{\epsilon}^c(z)} q_Z\log(q_Z)d\mathrm{Vol}(w)\\
&= \int_{B_{\epsilon}(z)} q_Z\log(q_Z)d\mathrm{Vol}(w) + o(t^n).
\end{align*}
\end{proof} 
It is then enough to compute the integral in Equation (\ref{entropy_formula}) on a small neighborhood of $z$ in order to have any order (in $t$)  expansion of the entropy.
\vspace{0.5cm}

Throughout the rest of this work, we fix a point $z\in (Z,g)$ and a ball $U:= B_{\epsilon}(z)$ of radius $\epsilon$ centered at $z$ such that the closure $\bar{U}$ of $U$ does not intersect the cut-locus of $z$ (we also identify $U$ with its image with respect to the exponential map based at $z$). We denote by $y$ the corresponding normal coordinates of $w\in U$, i.e  $\exp^{-1}(w) =  y\in \mathbb{R}^d\cong T_zZ$ and $\dist(z,w)^2 = \vert\vert y\vert\vert^2:= g(y,y)$. Moreover, let $\mathsf{Ric}(z)$ and $\mathsf{Sc}(z):= \mathrm{Trace}(\mathsf{Ric}(z))$ denote respectively  the Ricci curvature  and the scalar curvature of $Z$  at $z$. By Lemma \ref{exponential_estimate}, we have for any positive integer $n$:
$$\int_{Z}q_Z(t,z,w)\log q_Z(t,z,w) d\mathrm{Vol}(w) = \int_{U}q_Z(t,z,w)\log q_Z(t,z,w) d\mathrm{Vol}(w) + o(t^n).$$

We are going to use the following $n$th order ($n\in \{0,1,2,\dots\}$) Minakshisundaram-Pleijel parametrix expansion of $q_Z(t,z,w)$, which is valid for $w$ in the closure $\bar{U}$ of $U$.

\begin{theorem}[Minakshisundaram-Pleijel parametrix expansion]\label{th:kannai}
	Let $K \subset Z$ be compact such that $K \times K$ does not intersect the cut locus. Then there exist smooth functions $u_i: K \times K \to \mathbb{R}$ and $R_i:(0,\infty) \times K \times K \to \mathbb{R}$ such that for all $n\in \mathbb{N}$ and z, $w\in K$, 
\begin{equation}\label{parametrix}
	\begin{split}
		q_Z(t,z,w)		&= \frac{1}{(2\pi t)^{d/2}}\exp\left(-\frac{\vert\vert y\vert\vert^2}{2t}\right)\left[\sum_{i=0}^nu_i(z,w)t^i + R_n(t, z, w)\right],
	\end{split}
\end{equation}
where 
\[
\lim_{t \downarrow 0} \sup_{(z, w) \in K \times K} t^{-n} |R_n(t, z, w)| = 0.
\]
\end{theorem}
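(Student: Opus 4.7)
The plan is to follow the classical Minakshisundaram--Pleijel construction and then close the uniform error estimate via Duhamel's principle, essentially along the lines of \cite{kannai1977off}. Start from the ansatz
$$H_n(t,z,w) := \frac{1}{(2\pi t)^{d/2}} \exp\!\left(-\frac{\vert\vert y\vert\vert^2}{2t}\right) \sum_{i=0}^{n} u_i(z,w)\, t^i,$$
where, as in the setup, $y = \exp_z^{-1}(w)$ and $\vert\vert y\vert\vert^2 = \dist(z,w)^2$. Applying the operator $\partial_t - \tfrac{1}{2}\Delta_z$ to $H_n$ and expanding the Laplacian in normal coordinates at $z$, the most singular terms in $t$ are killed by requiring that $u_0, u_1, \ldots, u_n$ satisfy a sequence of transport equations along the radial geodesic from $z$ to $w$: a homogeneous first-order linear ODE pinning down $u_0$ up to an initial condition, and inhomogeneous analogues with source $\tfrac{1}{2}\Delta_z u_{i-1}$ for $i \ge 1$. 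The normalization $u_0(z,z)=1$, together with the requirement that each $u_i$ remain bounded as $w \to z$, determines unique smooth solutions on all of $K \times K$, because the radial geodesic $s \mapsto \exp_z(sy)$ depends smoothly on $(z,w)$ wherever $K\times K$ avoids the cut locus; explicitly, $u_0(z,w) = \theta(z,w)^{-1/2}$ with $\theta$ the Jacobian density of $\exp_z$.

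With the $u_i$'s chosen in this way, a direct computation shows that the residual collapses to a single term,
$$E_n(t,z,w) := \left(\partial_t - \tfrac{1}{2}\Delta_z\right)\! H_n(t,z,w) = \frac{t^n}{2(2\pi t)^{d/2}} \exp\!\left(-\frac{\vert\vert y\vert\vert^2}{2t}\right) \Delta_z u_n(z,w),$$
so that $|E_n(t,z,w)| \le C\, t^{n - d/2} \exp(-\vert\vert y\vert\vert^2/(2t))$ uniformly for $(z,w)\in K\times K$. Since $H_n(t,z,\cdot) \to \delta_z$ distributionally as $t\downarrow 0$ (which follows from $u_0(z,z) = 1$ and the Gaussian normalization), Duhamel's principle yields
$$q_Z(t,z,w) - H_n(t,z,w) = -\int_0^t \!\int_Z q_Z(t-s,z,v)\, E_n(s,v,w)\, d\mathrm{Vol}(v)\, ds,$$
and $R_n$ is then defined implicitly via the identity stated in the theorem.

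The remaining task, and the main obstacle, is to estimate the Duhamel integral uniformly in $(z,w)$. Using the Gaussian upper bound on $q_Z$ from \cite{ludewig2019strong} (the same bound already invoked in Lemma \ref{exponential_estimate}) together with the pointwise bound on $E_n$, and localizing to a tubular neighborhood of the diagonal (the far-field contribution is exponentially small in $1/t$, exactly as in Lemma \ref{exponential_estimate}), the goal is
$$|q_Z(t,z,w) - H_n(t,z,w)| \le C\, t^{n+1 - d/2}\, \exp\!\left(-\frac{\vert\vert y\vert\vert^2}{2t}\right)$$
uniformly on $K\times K$; dividing by the Gaussian prefactor then gives $|R_n(t,z,w)| \le C'\, t^{n+1}$ uniformly, which is stronger than the conclusion required. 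The technical heart is a Chapman--Kolmogorov-type Gaussian convolution performed in normal coordinates, in which the Riemannian and Euclidean distance functions must be compared carefully so as to preserve the polynomial-in-$t$ gain contributed by $E_n$. This is precisely the content of Kannai's argument in \cite{kannai1977off}, and I would either cite it directly or reproduce its Gaussian convolution lemma on the localized chart.
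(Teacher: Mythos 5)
Your outline is correct and follows exactly the route the paper itself takes: the paper offers no independent proof of Theorem \ref{th:kannai} but simply attributes it to Kannai \cite{kannai1977off}, and your sketch (parametrix ansatz, transport equations determining the $u_i$, Duhamel's principle, and the Gaussian convolution estimate with a cutoff near the diagonal) is precisely the classical argument underlying that citation, with the hard quantitative step correctly identified and deferred to the same source. No gaps beyond what the paper itself leaves to the reference.
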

This theorem was basically proved by Kannai \cite{kannai1977off}, although his formulation of the theorem was slightly different. 

Furthermore, we have the following common property of the functions $u_i$ $(i\in \{0,1,2,...\})$ and the square root of the determinant of the metric tensor at $w$.
\begin{prop}\label{smoothness}
Let $z\in Z$ and let $R_z,\nabla R_z,\nabla^2R_z,...,\nabla^nR_z,...$ denote the covariant derivatives of the curvature tensor at $z$. For $i\in \mathbb{N}$, let $u_i(0,y)$ be the expression of $u_i(z,w)$ of Equation (\ref{parametrix}) in terms of the normal coordinates (the normal coordinate of $z$ and $w$ are respectively $0$ and $y$). Then the coefficients of the Taylor series at $y=0$ of the function
$$y\in U\longmapsto u_i(0,y)\in \mathbb{R}$$ can be expressed as a  universal polynomials\footnote{These are polynomials that depend only on the dimension of $Z$} in the components of $R_z,\nabla R_z,\nabla^2R_z,...,\nabla^nR_z,...$ expressed in normal coordinates.
The same result holds for the function$$y\in U \longmapsto \sqrt{\mathrm{det}[g(y)]},$$ where $g(y)$ is the matrix of the Riemannian metric at $w$ expressed in terms of the normal coordinate~ $y$.
\end{prop}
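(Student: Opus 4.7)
The plan is to establish the statement in three stages: first for the metric components $g_{ij}(y)$ in normal coordinates, then for $\sqrt{\det g(y)}$, and finally for each $u_i(0,y)$ by induction using the transport equations of the parametrix construction. It is a classical fact, going back to Cartan and derived for instance by Taylor-expanding the Jacobi equation along radial geodesics with parallel-transported orthonormal frames, that in normal coordinates at $z$ the Taylor coefficients of $g_{ij}(y)$ at $y=0$ are universal polynomials in the components of $R_z,\nabla R_z,\nabla^2 R_z,\ldots$ in normal coordinates. Denote by $\mathcal{P}$ the class of formal power series in $y$ at the origin whose coefficients are universal polynomials of this kind.

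The class $\mathcal{P}$ is closed under the standard algebraic operations (addition, multiplication, partial differentiation), under the operation $f\mapsto \int_0^1 s^{i-1} f(sy)\, ds$ (which at the Taylor-coefficient level merely divides the coefficient of $y^\alpha$ by $|\alpha|+i$), under composition with analytic scalar functions at admissible values of the constant term, and under inversion of matrix-valued series whose constant term is invertible. Since $g(0)=I$ and $\det g(0)=1$, this immediately gives that $g^{ij}(y)$, $\det g(y)$, and $\sqrt{\det g(y)}$ all lie in $\mathcal{P}$, establishing the second assertion of the proposition.

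For the $u_i$, I substitute the ansatz from (\ref{parametrix}) into the heat equation (\ref{heatequation}) and match powers of $t$. This produces the Minakshisundaram-Pleijel transport equations, which in normal coordinates centered at $z$ yield the closed-form expression $u_0(0,y) = \bigl(\det g(y)\bigr)^{-1/4}$ and, for $i\geq 1$, an ODE of the form
$$i\, u_i(0,y) + (y\cdot \nabla_y) u_i(0,y) = F_i(y),$$
where $F_i$ is built algebraically from $u_0$, $u_{i-1}$, and the Laplace-Beltrami operator $\Delta_g$ in normal coordinates. Solving along radial rays yields $u_i(0,y)=\int_0^1 s^{i-1} F_i(sy)\, ds$. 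Since $\Delta_g$ has coefficients polynomial in $g^{ij}$ and first derivatives of $g_{ij}$, which lie in $\mathcal{P}$ by the preceding paragraph, and since $\mathcal{P}$ is closed under all operations appearing here, an induction on $i$ establishes $u_i(0,y)\in \mathcal{P}$ for every $i$. The main obstacle is bookkeeping: one must set up the transport equations so that the radial integration manifestly produces a smooth function at $y=0$ (i.e.\ no spurious $1/r$ singularities), and carefully track that each ingredient appearing in $F_i$ stays within $\mathcal{P}$. Conceptually the argument is linear, but the explicit recursion that is needed in Section \ref{lastsection} to compute $c_0, c_1, c_2$ requires the full expansion, so the qualitative proof here and the quantitative computation there are complementary.
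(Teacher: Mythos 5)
Your proposal is correct and follows essentially the same route as the proof the paper defers to (the proof of Lemma 3.26 in Rosenberg's book, and the machinery the paper itself outlines in Section \ref{se:computation-general-procedure}): Cartan's normal-coordinate expansion of $g_{ij}$ via the Jacobi equation, closure of the class of ``curvature-polynomial'' Taylor series under the relevant algebraic and radial-integration operations, the identity $u_0(0,y)=(\det g(y))^{-1/4}$, and induction on $i$ through the Minakshisundaram--Pleijel transport recursion. You have simply written out in full the standard argument that the paper cites, so no comparison beyond this is needed.
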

\begin{proof}
See Proof of  \cite[Lemma 3.26]{rosenberg1997laplacian}
\end{proof}
Before going to the next section, let us prove some elementary results and some properties of Gaussian integrals.
\begin{defn}\label{Symmetric function}
	Let $f$ be a function defined on a ball $\mathbb{B}$ of center $0$ (and of any radius) of $\mathbb{R}^d$. We say that $f$ is \textit{odd} if $$f(X) = -f(-X)$$ for any $X\in \mathbb{B}$. 
\end{defn}

\begin{lem}\label{simpleLemma}
	Let $f$ be an odd and integrable function on a ball $\mathbb{B}$ of center $0$ of $\mathbb{R}^d$. Then $$\int_\mathbb{B} f(x)dx = 0.$$
\end{lem}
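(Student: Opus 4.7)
The plan is to exploit the symmetry of the ball $\mathbb{B}$ around the origin together with the oddness of $f$ by means of the change of variables $x\mapsto -x$. Since $\mathbb{B}$ is a ball centered at $0$, this map is a diffeomorphism of $\mathbb{B}$ onto itself, and its Jacobian has determinant $(-1)^d$, whose absolute value is $1$.

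First I would apply the change of variables $y=-x$ to rewrite
\[
\int_\mathbb{B} f(x)\,dx = \int_\mathbb{B} f(-y)\,dy.
\]
Next I would use the defining property of an odd function, namely $f(-y) = -f(y)$ for all $y\in\mathbb{B}$ (Definition \ref{Symmetric function}), to conclude
\[
\int_\mathbb{B} f(-y)\,dy = -\int_\mathbb{B} f(y)\,dy.
\]
Combining these two identities gives $\int_\mathbb{B} f(x)\,dx = -\int_\mathbb{B} f(x)\,dx$, and integrability of $f$ ensures that this quantity is a finite real number, so it must be $0$.

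There is essentially no obstacle here; the only subtlety worth mentioning is that the argument relies on $\mathbb{B}$ being symmetric with respect to the origin, which is given since it is a ball centered at $0$, and on $f$ being integrable so that the manipulations are justified without worrying about $\pm\infty$. This lemma will later be useful to discard odd-degree monomials in $y$ when the parametrix expansion is combined with Gaussian weights in normal coordinates.
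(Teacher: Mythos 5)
Your proof is correct and follows essentially the same route as the paper: the substitution $y=-x$ on the symmetric ball, the oddness of $f$, and finiteness of the integral from integrability. Your remark that the Jacobian determinant is $(-1)^d$ with absolute value $1$ is in fact slightly more careful than the paper's statement that it equals $1$.
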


\begin{proof}
	Consider the change of variable $y=-x$. The Jacobian of this change of variables is $1$. Therefore
	\[
	\int_{\mathbb{B}} f(x) dx = \int_{\mathbb{B}} f(-y) dy = - \int_{\mathbb{B}} f(y) dy
	\]
	and we have the result.
\end{proof}

\begin{lem}\label{lemma1}
	Let $\sigma$ be a positive real number and let $y=(y_1,...,y_d)\sim \mathcal{N}(0_d, \sigma\mathbb{I}_d)$ be a $d$-dimensional Gaussian random variable.
	\begin{enumerate}[(i)]
		\item\label{lemma1:i} We have 
		\[
		\int_{\mathbb{R}}e^{-\frac{x^2}{2\sigma}}dx = \sqrt{2\pi \sigma}.
		\]
		\item\label{lemma1:ii} Assume that $\sigma=1$. For $i = 1, \dots, d$, we have
		\begin{align*}
		\int_{\mathbb{R}^d} \frac{1}{(2\pi)^{d/2}}\exp\left(-\frac{\vert\vert  y\vert\vert^2}{2}\right)\vert\vert y\vert\vert^2 (y_i)^2dy& = d+2,\\
			\int_{\mathbb{R}^d} \frac{1}{(2\pi )^{d/2}}\exp\left(-\frac{\vert\vert y\vert\vert^2}{2}\right)\vert\vert y\vert\vert^2 (y_i)^4dy& =3d+12.\\
		\end{align*}
		\item\label{lemma1:iii} Assume that $\sigma=1$. Let $n\in \{0,1,2,...\}$ and let $\epsilon >0$. Let $Y=(Y_1,...,Y_d)\in \mathbb{R}^d$ $(d\in \{1,2,3,...\})$ and let $P(Y)\in \mathbb{R}[Y_1,...,Y_d]$ be a polynomial. For $t>0$, consider
		$$B_{\frac{\epsilon}{\sqrt{t}}}:= \left\{Y\in \mathbb{R}^d: \vert\vert Y\vert\vert=\sqrt{Y_1^2+...+Y_d^2}\leq \frac{\epsilon}{\sqrt{t}}\right\}.$$
		Then we have 
		$$\int_{B_{\frac{\epsilon}{\sqrt{t}}}}\frac{1}{(2\pi)^{d/2}}\exp\left(-\frac{\vert\vert y\vert\vert^2}{2}\right)P(y)dy = \int_{\mathbb{R}^d}\frac{1}{(2\pi)^{d/2}}\exp\left(-\frac{\vert\vert y\vert\vert^2}{2}\right)P(y)dy+ o(t^n)$$ for small values of $t$.
		
		\item\label{lemma1:iv} Let $P(y)=\prod_{i=1}^{d}y_i^{k_i}$ be a monomial of degree $k=k_1+\cdots+k_d$ ($k_1,...,k_d$ positive integers). Then there exists $c\in \mathbb{R}$ independent of $\sigma$ such that $$\int_{\mathbb{R}^d}\frac{1}{(2\pi \sigma)^{d/2}}\exp\left(-\frac{\vert\vert y\vert\vert^2}{2\sigma}\right)P(y)dy= c\sigma^{\frac{k}{2}}.$$
	\end{enumerate}
\end{lem}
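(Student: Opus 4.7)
The four parts of the lemma are essentially classical and can be handled independently, so the plan is to deal with them in order, each by the most direct available computation.

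For part \ref{lemma1:i}, I would use the standard trick of computing $\left(\int_{\mathbb{R}} e^{-x^2/(2\sigma)}\,dx\right)^2$ as a double integral on $\mathbb{R}^2$, switching to polar coordinates, and taking the square root. For part \ref{lemma1:iv}, the change of variables $y=\sqrt{\sigma}\,z$ gives $dy = \sigma^{d/2}dz$, $\|y\|^2 = \sigma\|z\|^2$, and $P(y) = \sigma^{k/2}P(z)$, so the prefactor $(2\pi\sigma)^{-d/2}$ cancels with $\sigma^{d/2}$ and the integral reduces to $\sigma^{k/2}$ times the corresponding standard Gaussian expectation, which is some real constant $c$ independent of $\sigma$.

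Part \ref{lemma1:ii} reduces to computing moments of a standard Gaussian. Using independence of the coordinates and the well-known one-dimensional moments $\mathbb{E}[y_i^{2}]=1$, $\mathbb{E}[y_i^{4}]=3$, $\mathbb{E}[y_i^{6}]=15$, I would expand
\[
\mathbb{E}\!\left[\|y\|^2 y_i^2\right] = \mathbb{E}[y_i^4] + \sum_{j\neq i}\mathbb{E}[y_i^2]\mathbb{E}[y_j^2] = 3+(d-1) = d+2,
\]
and similarly
\[
\mathbb{E}\!\left[\|y\|^2 y_i^4\right] = \mathbb{E}[y_i^6] + \sum_{j\neq i}\mathbb{E}[y_i^4]\mathbb{E}[y_j^2] = 15+3(d-1) = 3d+12.
\]

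For part \ref{lemma1:iii}, which is the only one requiring a little more than bookkeeping, the plan is to bound the tail integral over $\mathbb{R}^d\setminus B_{\epsilon/\sqrt{t}}$. Write $P(y)$ as a sum of monomials; by the triangle inequality it suffices to bound $\int_{\|y\|\geq \epsilon/\sqrt{t}} e^{-\|y\|^2/2}|y|^{m}\,dy$ in spherical coordinates by $C\int_{\epsilon/\sqrt{t}}^{\infty} r^{m+d-1} e^{-r^2/2}\,dr$, and then use the elementary estimate $\int_{R}^{\infty} r^{N} e^{-r^2/2}\,dr \leq C_N R^{N-1} e^{-R^2/2}$ for large $R$. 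With $R=\epsilon/\sqrt{t}$ this gives an upper bound of order $t^{-(N-1)/2}e^{-\epsilon^{2}/(2t)}$, which decays faster than any power of $t$. The mild subtlety is to make sure one handles all monomials at once; subsuming the polynomial factor into $C(1+\|y\|)^{\deg P}$ and using the same tail estimate avoids clutter. This exponential-versus-polynomial comparison is the only real content in the whole lemma, and it is routine.
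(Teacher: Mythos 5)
Your proposal is correct throughout, and parts (\ref{lemma1:i}), (\ref{lemma1:ii}) and (\ref{lemma1:iv}) coincide with the paper's treatment up to presentation: the paper proves (\ref{lemma1:ii}) by exactly the same expansion of $\|y\|^2=\sum_j y_j^2$ into one-dimensional moments, and proves (\ref{lemma1:iv}) by quoting the explicit $\sigma$-dependent moment formula $\mathbb{E}[y_j^p]=\sigma^{p/2}\prod_k(p-2k-1)$ coordinatewise, which is equivalent to your rescaling $y=\sqrt{\sigma}\,z$ (your version is arguably cleaner since it makes the $\sigma^{k/2}$ factor appear without computing any moments). The genuine difference is in part (\ref{lemma1:iii}). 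The paper reduces to a single monomial and then splits into two cases according to the parity of the exponents: when all $k_i$ are even it uses positivity of $P$, inscribes the cube $[-\epsilon/\sqrt{dt},\epsilon/\sqrt{dt}]^d$ inside the ball so that the error factorizes into one-dimensional tails, and bounds each tail via $e^{-x^2/2}x^k\leq x^{-4n}$, obtaining only the $o(t^n)$ that is needed; when some $k_{i_0}$ is odd it observes that the ball is symmetric under $y_{i_0}\mapsto -y_{i_0}$, so the difference of the two integrals is exactly zero. You instead bound the absolute value of the tail $\int_{\|y\|\geq\epsilon/\sqrt{t}}e^{-\|y\|^2/2}|P(y)|\,dy$ directly in spherical coordinates with a Mills-ratio-type estimate, which handles all monomials uniformly, dispenses with the parity case split and the inscribed cube, and yields the strictly stronger bound $O\bigl(t^{-(N-1)/2}e^{-\epsilon^2/(2t)}\bigr)$, of which $o(t^n)$ for every $n$ is an immediate consequence. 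What the paper's route buys is that it stays entirely with one-dimensional Gaussian integrals and elementary comparisons; what yours buys is brevity, uniformity over the polynomial, and an exponentially small error term. Both are complete proofs.
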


\begin{proof}\begin{enumerate}[(i)]
\item   This follows from the probability density function of $\mathcal{N}(0,\sigma)$.
\item   Let $i\in \{1,2,\dots,d\}$.
\begin{enumerate}[1.]
\item Since $\mathbb{E}[y_j^4]=3$, $\mathbb{E}[y_j^2]=1$ $(j\in \{1,2,\dots ,d\})$ and $\vert\vert y\vert\vert^2 = y_1^2+\dots+ y_d^2$, we have
\begin{align*}\displaystyle
	\int_{\mathbb{R}^d} \frac{1}{(2\pi)^{d/2}}\exp\left(-\frac{\vert\vert y\vert\vert^2}{2}\right)\vert\vert y\vert\vert^2 (y_i)^2dy&= \int_{\mathbb{R}} \frac{1}{\sqrt{2\pi}} e^{-\frac{y_i^2}{2}}y_i^4dy_i
	\\ &\quad + \sum_{j\in \{1,\dots,d\}\setminus \{i\}}\left(\int_{\mathbb{R}} \frac{1}{\sqrt{2\pi}} e^{-\frac{y_j^2}{2}}y_j^2dy_j  \right) \left(\int_{\mathbb{R}} \frac{1}{\sqrt{2\pi }} e^{-\frac{y_i^2}{2}}y_i^2dy_i \right)\\
	&= \mathbb{E}[y_i^4] + \mathbb{E}[y_i^2]\;\sum_{j\neq i} \mathbb{E}[y_j^2]\\& = 3 + (d-1)\\
	&= (d+2).
\end{align*}
\item Since $\mathbb{E}[y_j^6]=15$, $\mathbb{E}[y_j^4]=3$ $(j\in \{1,2,\dots ,d\})$ and $\vert\vert y\vert\vert^2 = y_1^2+\dots+ y_d^2$, we have
\begin{align*}\displaystyle
	\int_{\mathbb{R}^d} \frac{1}{(2\pi)^{d/2}}\exp\left(-\frac{\vert\vert y\vert\vert^2}{2}\right)\vert\vert y\vert\vert^2 (y_i)^4dy&= \int_{\mathbb{R}} \frac{1}{\sqrt{2\pi}} e^{-\frac{y_i^2}{2}}y_i^6dy_i 
	\\ &\quad + \sum_{j\in \{1,\dots,d\}\setminus \{i\}}\left(\int_{\mathbb{R}} \frac{1}{\sqrt{2\pi}} e^{-\frac{y_j^2}{2}}y_j^2dy_j  \right) \left(\int_{\mathbb{R}} \frac{1}{\sqrt{2\pi }} e^{-\frac{y_i^2}{2}}y_i^4dy_i \right)\\
	&= \mathbb{E}[y_i^6] +\mathbb{E}[y_i^4]\; \sum_{j\neq i} \mathbb{E}[y_j^2]\\& = 15 + 3(d-1)\\
	&= 3d+12.
\end{align*}
\end{enumerate}
\item   Since any polynomial is a linear combination of monomials, it is enough to assume that $P(Y)= Y_1^{k_1}...Y_d^{k_d}$ with $k_1,...,k_d\in \{0,1,2,...\}.$
\begin{enumerate}[1.]
	\item 
Assume first that $k_i$ is even for $i=1,\dots,d$. Then $P(y)\geq 0$ for all $y\in \mathbb{R}^d$. Consider the cube of dimension $d$ defined by
\begin{align*}
	B_{\frac{\epsilon}{\sqrt{dt}}}^{\infty}&:= \left\{Y=(Y_1,...,Y_d)\in \mathbb{R}^d: \vert\vert Y\vert\vert^{\infty} = \mathsf{max}_i(\vert Y_i\vert)\leq \frac{\epsilon}{\sqrt{dt}} \right\}\\
	&= \left[-\frac{\epsilon}{\sqrt{dt}},\frac{\epsilon}{\sqrt{dt}}\right]^d.
\end{align*}
Since $$\vert \vert Y\vert\vert = \sqrt{Y_1^2+...+Y_d^2} \leq \sqrt{d}\;\mathsf{max}_i (\vert Y_i\vert) =  \sqrt{d}\vert\vert Y\vert\vert^{\infty},$$ we have that $$\displaystyle \left[-\frac{\epsilon}{\sqrt{dt}},\frac{\epsilon}{\sqrt{dt}}\right]^d=B_{\frac{\epsilon}{\sqrt{dt}}}^{\infty}\subseteq B_{\frac{\epsilon}{\sqrt{t}}},$$ i.e.
$$0\leq \int_{[-\frac{\epsilon}{\sqrt{dt}},\frac{\epsilon}{\sqrt{dt}}]^d}\frac{1}{(2\pi)^{d/2}}\exp\left(-\frac{\vert\vert y\vert\vert^2}{2}\right)P(y) dy\leq \int_{B_{\frac{\epsilon}{\sqrt{t}}}}\frac{1}{(2\pi)^{d/2}}\exp\left(-\frac{\vert\vert y\vert\vert^2}{2}\right)P(y) dy.$$
Hence 
\begin{equation}\label{equation_trivial}
	\begin{split}
	0&\leq  \int_{\mathbb{R}^d}\frac{1}{(2\pi)^{d/2}}\exp\left(-\frac{\vert\vert y\vert\vert^2}{2}\right)P(y) dy-\int_{B_{\frac{\epsilon}{\sqrt{t}}}}\frac{1}{(2\pi)^{d/2}}\exp\left(-\frac{\vert\vert y\vert\vert^2}{2}\right)P(y) dy\\
	&\leq \int_{\mathbb{R}^d}\frac{1}{(2\pi)^{d/2}}\exp\left(-\frac{\vert\vert y\vert\vert^2}{2}\right)P(y) dy-\int_{[-\frac{\epsilon}{\sqrt{dt}},\frac{\epsilon}{\sqrt{dt}}]^d}\frac{1}{(2\pi)^{d/2}}\exp\left(-\frac{\vert\vert y\vert\vert^2}{2}\right)P(y) dy\\
	&= \int_{\mathbb{R}^d}\frac{1}{(2\pi)^{d/2}}\exp\left(-\frac{\vert\vert y\vert\vert^2}{2}\right)P(y) dy - \prod_{i=1}^{d}\int_{[-\frac{\epsilon}{\sqrt{dt}},\frac{\epsilon}{\sqrt{dt}}]}\frac{1}{\sqrt{2\pi}}e^{-\frac{ y_i^2}{2}}y_i^{k_i}dy_i,\\
	&= \mathbb{E}[P(y) ] -\prod_{i=1}^{d}\left(\mathbb{E}[y_i^{k_i}]-2 \int_{\frac{\epsilon}{\sqrt{dt}}}^{\infty}\frac{1}{\sqrt{2\pi}}e^{-\frac{y_i^2}{2}}y_i^{k_i}dy_i\right).
\end{split}
\end{equation}

Since $$\displaystyle\mathbb{E}[P(y) ] = \mathbb{E}[ y_1 ^{k_1}... y_d^{k_d}]= \prod_{i=1}^{d}\mathbb{E}[y_i^{k_i}],$$
then the term $\mathbb{E}[P(y)]$ of the upper bound
$$\displaystyle \mathbb{E}[P(y) ] -\prod_{i=1}^{d}\left(\mathbb{E}[y_i^{k_i}]-2 \int_{\frac{\epsilon}{\sqrt{dt}}}^{\infty}\frac{1}{\sqrt{2\pi}}e^{-\frac{y_i^2}{2}}y_i^{k_i}dy_i\right)$$
in Equation (\ref{equation_trivial}) cancels out after expanding the finite product $$\displaystyle \prod_{i=1}^{d}\left(\mathbb{E}[y_i^{k_i}]-2 \int_{\frac{\epsilon}{\sqrt{dt}}}^{\infty}\frac{1}{\sqrt{2\pi}}e^{-\frac{y_i^2}{2}}y_i^{k_i}dy_i\right).$$ 
 Since $\mathbb{E}[y_j ^{k_j}]$ is finite for $j=1,\dots, d$, then it is enough to prove that $\int_{\frac{\epsilon}{\sqrt{dt}}}^{\infty}\frac{1}{\sqrt{2\pi}}e^{-\frac{x^2}{2}}x^{k}dx= o(t^n)$ for any fixed $k\in \{2m:\; m=0,1,2,\dots\}$. 

Indeed, since $\lim\limits_{x\longrightarrow \infty}e^{-\frac{x^2}{2}}x^kx^{4n}=0,$ then we always have $e^{-\frac{x^2}{2}}x^kx^{4n}\leq 1$ for small $t$. i.e. $$e^{-\frac{x^2}{2}}x^k\leq \frac{1}{x^{4n}}$$ for $x\geq \frac{\epsilon}{\sqrt{dt}}.$
Hence, if $n>0$
\begin{align*}
0&\leq \int_{\frac{\epsilon}{\sqrt{dt}}}^{\infty}\frac{1}{\sqrt{2\pi}}e^{-\frac{x^2}{2}}\vert x\vert^{k}dx\\
&\leq \int_{\frac{\epsilon}{\sqrt{dt}}}^{\infty}\frac{1}{x^{4n}}dx = \frac{d^{\frac{4n-1}{2}}\epsilon^{-4n+1}}{4n-1} t^{\frac{4n-1}{2}}\\
&= o(t^n).
\end{align*}
The case $n=0$ follows by bounding $e^{-\frac{x^2}{2}}x^k$ with $\leq \frac{1}{x^{2}}$ (this is always possible with small values of $t$).
This implies that $$\displaystyle \mathbb{E}[P(y) ] -\prod_{i=1}^{d}\left(\mathbb{E}[y_i^{k_i}]-2 \int_{\frac{\epsilon}{\sqrt{dt}}}^{\infty}\frac{1}{\sqrt{2\pi}}e^{-\frac{y_i^2}{2}}y_i^{k_i}dy_i\right) = o(t^n)$$
and the result follows by Equation (\ref{equation_trivial}).
\item Assume now that there exists $i_0\in \{1,\dots,d\}$ such that $k_{i_0}$ is odd. Consider the change of variables $s_0:\mathbb{R}^d\to \mathbb{R}^d$ defined for $y=(y_1,\dots,y_d)\in \mathbb{R}^d$ by 
$$s_0(y) = (y_1,\dots,y_{i_0-1}, -y_{i_0},y_{i_0+1},\dots,y_d).$$ 
Then the Jacobian of $s_0$ is $1$ and we have that
\begin{align*}
\begin{cases}
& \vert \vert y\vert \vert = \vert\vert s_0(y)\vert\vert\\
& P(y) = - P[s_0(y)]
\end{cases}.
\end{align*}
Hence
\begin{align*}
 \int_{\mathbb{R}^d}\frac{1}{(2\pi)^{d/2}}&\exp\left(-\frac{\vert\vert y\vert\vert^2}{2}\right)P(y) dy-\int_{B_{\frac{\epsilon}{\sqrt{t}}}}\frac{1}{(2\pi)^{d/2}}\exp\left(-\frac{\vert\vert y\vert\vert^2}{2}\right)P(y) dy\\
&= \int_{\mathbb{R}^d}\frac{1}{(2\pi)^{d/2}}\exp\left(-\frac{\vert\vert s_0(y)\vert\vert^2}{2}\right)P[s_0(y)] dy-\int_{B_{\frac{\epsilon}{\sqrt{t}}}}\frac{1}{(2\pi)^{d/2}}\exp\left(-\frac{\vert\vert s_0(y)\vert\vert^2}{2}\right)P[s_0(y)] dy\\
&= -\int_{\mathbb{R}^d}\frac{1}{(2\pi)^{d/2}}\exp\left(-\frac{\vert\vert y\vert\vert^2}{2}\right)P(y) dy+\int_{B_{\frac{\epsilon}{\sqrt{t}}}}\frac{1}{(2\pi)^{d/2}}\exp\left(-\frac{\vert\vert y\vert\vert^2}{2}\right)P(y) dy\\
&= -\left(\int_{\mathbb{R}^d}\frac{1}{(2\pi)^{d/2}}\exp\left(-\frac{\vert\vert y\vert\vert^2}{2}\right)P(y) dy-\int_{B_{\frac{\epsilon}{\sqrt{t}}}}\frac{1}{(2\pi)^{d/2}}\exp\left(-\frac{\vert\vert y\vert\vert^2}{2}\right)P(y) dy \right).
\end{align*}
This implies that
$$\displaystyle \int_{\mathbb{R}^d}\frac{1}{(2\pi)^{d/2}}\exp\left(-\frac{\vert\vert y\vert\vert^2}{2}\right)P(y) dy-\int_{B_{\frac{\epsilon}{\sqrt{t}}}}\frac{1}{(2\pi)^{d/2}}\exp\left(-\frac{\vert\vert y\vert\vert^2}{2}\right)P(y) dy=0=o(t^n)$$ and we have the result.
\end{enumerate}

\item   This follows by using the moment of order $p\in \{0,1,2,\dots\}$ of a  Gaussian random variable. More precisely,
\begin{align*}
{\displaystyle \int_{\mathbb{R}} \frac{y_j^p}{(2\pi \sigma)^{d/2}}e^{-\frac{y_j^2}{2\sigma}}dy_j = \mathbb{E}[y_j^p]}=\begin{cases}
	& 0 \;\;\mathrm{if}\;\;p\;\;\mathrm{is} \;\;\mathrm{odd,}\\ & \sigma^{p/2}\; {\displaystyle\prod_{k=0}^{\lceil\frac{p-1}{2}\rceil-1}(p-2k-1)} \;\;\mathrm{if}\;\;p\;\;\mathrm{is}\;\;\mathrm{even}
\end{cases}
\end{align*}
for $j\in \{1,2,...,d\}$, where $\lceil\frac{p-1}{2}\rceil= \mathrm{min}\{m\in \mathbb{N}: m\geq \frac{p-1}{2} \}$.
\end{enumerate}
\end{proof}

As in \cite{sakai1971eigen}, we denote $Y = (Y^1,...,Y^d)\in \mathbb{R}^d$, i.e.~we put the index up for the components of $Y\in \mathbb{R}^d$ and we will use the Einstein summation convention. Furthermore, for $k\in \mathbb{N}$ and for any finite sequence of integer $i_1,...,i_k\in \{1,...,d\}$, we denote:
\begin{equation}\label{MomentGaussianNotation}
	\begin{split}
		\mu^{i_1...i_k}&:= \int_{\mathbb{R}^d}\frac{1}{(2\pi)^{d/2}}\exp\left(-\frac{\vert\vert y\vert\vert^2}{2}\right)Y^{i_1}...Y^{i_k}\frac{\vert\vert Y\vert\vert^2}{2}dY,\\
		\nu^{i_1...i_k}&:= \int_{\mathbb{R}^d}\frac{1}{(2\pi)^{d/2}}\exp\left(-\frac{\vert\vert y\vert\vert^2}{2}\right)Y^{i_1}...Y^{i_k}dY.
	\end{split}
\end{equation}
We will need the following contraction Lemma.
\begin{lem}\label{contractionlemma}
	Let $d\in \{1,2,...\}$. For $i,j,k,l\in \{1,...,d\}$, let $\mathsf{R}_{ijkl}$ and $T_{ij}$ be real numbers. Then 
	\begin{enumerate}[(i)]
		\item $\mathsf{R}_{ijkl}\;\mu^{ijkl}=\frac{d+4}{2}\left(\mathsf{R}_{iijj}+\mathsf{R}_{ijij}+\mathsf{R}_{ijji} \right)$,
		\item $\mathsf{R}_{ijkl}\;\nu^{ijkl}=\mathsf{R}_{iijj}+\mathsf{R}_{ijij}+ \mathsf{R}_{ijji}$,
		\item $T_{ij}\;\mu^{ij}= \frac{d+2}{2}\;\mu_{ii}=\frac{d+2}{2}\;\mathrm{Trace}(T)$,
		\item $T_{ij}\;\nu^{ij}= \nu_{ii}= \mathrm{Trace}(T)$
	\end{enumerate}
\end{lem}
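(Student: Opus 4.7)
The plan is to reduce all four claims to explicit evaluations of the Gaussian moments $\nu^{ij}$, $\nu^{ijkl}$, $\mu^{ij}$, $\mu^{ijkl}$ and then to contract with $T$ and $\mathsf{R}$.

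First, I would observe directly from \eqref{MomentGaussianNotation} that for any tuple of indices $i_1,\dots,i_k$,
\[
\mu^{i_1\cdots i_k} \;=\; \frac{1}{2}\sum_{m=1}^d \nu^{i_1\cdots i_k\, m m},
\]
because $\tfrac12\vert\vert Y\vert\vert^2 = \tfrac12\sum_m (Y^m)^2$, so it is enough to evaluate the $\nu$-moments. By Isserlis' theorem (the Wick formula) for a standard $d$-dimensional Gaussian,
\[
\nu^{ij} \;=\; \delta_{ij}, \qquad \nu^{ijkl} \;=\; \delta_{ij}\delta_{kl} + \delta_{ik}\delta_{jl} + \delta_{il}\delta_{jk}.
\]
Applying the same theorem to the six-point correlator and then setting the last two indices equal to $m$ and summing, I would classify the $15$ perfect pairings of $(i,j,k,l,m,m)$ into two cases: the $3$ pairings that leave the last two positions matched contribute a factor $\delta_{mm}=d$ after summation; the remaining $12$ pairings split the two $m$'s among the four free indices and contribute exactly $4$ copies of each of $\delta_{ij}\delta_{kl}$, $\delta_{ik}\delta_{jl}$, $\delta_{il}\delta_{jk}$. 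Together,
\[
\sum_m \nu^{ijklmm} \;=\; (d+4)\bigl(\delta_{ij}\delta_{kl} + \delta_{ik}\delta_{jl} + \delta_{il}\delta_{jk}\bigr),
\]
so $\mu^{ijkl} = \tfrac{d+4}{2}\bigl(\delta_{ij}\delta_{kl} + \delta_{ik}\delta_{jl} + \delta_{il}\delta_{jk}\bigr)$, and analogously $\mu^{ij} = \tfrac{d+2}{2}\delta_{ij}$.

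Second, substituting these formulas into the four contractions yields the claims at once: for (iv), $T_{ij}\nu^{ij} = T_{ij}\delta_{ij} = T_{ii} = \mathrm{Trace}(T)$; for (iii), $T_{ij}\mu^{ij} = \tfrac{d+2}{2}\,T_{ii}$; for (ii),
\[
\mathsf{R}_{ijkl}\nu^{ijkl} \;=\; \mathsf{R}_{ijkl}\bigl(\delta_{ij}\delta_{kl} + \delta_{ik}\delta_{jl} + \delta_{il}\delta_{jk}\bigr) \;=\; \mathsf{R}_{iijj}+\mathsf{R}_{ijij}+\mathsf{R}_{ijji};
\]
and (i) differs from (ii) only by the overall factor $\tfrac{d+4}{2}$.

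The only step with combinatorial content is the bookkeeping of the $15$ pairings, and the coefficient $d+4$ deserves a sanity check; I would verify it against the diagonal values already computed in Lemma \ref{lemma1}(\ref{lemma1:ii}): indeed $\mu^{iiii} = \tfrac12(3d+12) = \tfrac{3(d+4)}{2}$ agrees with $\tfrac{d+4}{2}(1+1+1)$, and $\mu^{ii} = \tfrac12(d+2)$ agrees with $\tfrac{d+2}{2}\cdot 1$. I do not anticipate a genuine obstacle beyond this combinatorial accounting.
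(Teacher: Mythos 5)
Your proof is correct, and it takes a genuinely different (and somewhat slicker) route than the paper's. The paper evaluates $\mu^{ijkl}$ and $\nu^{ijkl}$ by enumerating the possible coincidence patterns of the indices ($\#\{i,j,k,l\}=4,3,2,1$), reading off each value from the one-dimensional moments of Lemma \ref{lemma1}(\ref{lemma1:ii}), and then performing the contraction by the add-and-subtract manipulation $\sum_{i\neq j}=\sum_{i,j}-\sum_{i=j}$ to absorb the diagonal terms. You instead use the reduction $\mu^{i_1\cdots i_k}=\tfrac12\sum_m\nu^{i_1\cdots i_k mm}$ together with Isserlis' theorem to obtain the single closed-form identity
\[
\mu^{ijkl}=\tfrac{d+4}{2}\left(\delta_{ij}\delta_{kl}+\delta_{ik}\delta_{jl}+\delta_{il}\delta_{jk}\right),
\]
valid uniformly in the indices, after which all four contractions are immediate with no case splitting. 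Your pairing count ($3$ matchings pairing the two $m$'s together, contributing $d$, and $12$ matchings distributing the $m$'s over $\{i,j,k,l\}$, contributing $4$ copies of each delta product) is accurate, and your sanity checks against $\mu^{iiii}=\tfrac{3d+12}{2}$ and $\mu^{ii}=\tfrac{d+2}{2}$ confirm agreement with the paper's case values. What your approach buys is a tensorial formula that generalizes painlessly to higher moments and avoids the diagonal bookkeeping; what the paper's approach buys is self-containedness, since it relies only on the elementary one-dimensional moment computations already established in Lemma \ref{lemma1} rather than invoking the Wick formula.
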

\begin{proof}
	$(i)\;$ By using Lemma \ref{lemma1} (\ref{lemma1:ii}), we have
	\begin{align*}\displaystyle
		\mu^{ijkl}&= \int_{\mathbb{R}^d} \frac{1}{(2\pi)^{d/2}} \exp\left(-\frac{\vert\vert y\vert\vert^2}{2}\right)\frac{\vert\vert Y\vert\vert^2}{2}Y^iY^jY^kY^l dY\\
		&=\begin{cases}
			0&\mathrm{if}\; \#\{i,j,k,l\}=4 \;\mathrm{or}\;\#\{i,j,k,l\}=3\;\\&\mathrm{or}\;\left(\#\{i,j,k,l\}=2\;\mathrm{and}\;\mathrm{exactly}\;\mathrm{three}\; \mathrm{of}\;i,j,k\;\mathrm{and}\;l\;\mathrm{are\; equal}\right)\\
			\frac{d+4}{2}&\mathrm{if}\;(i=j,\;k=l,\;i\neq l)\;\mathrm{or}\;(i=k,\;j=l,
			i\neq l)\;\mathrm{or}\;(i=l,\;j=k,\;i\neq k)\\
			\frac{3d+12}{2}&\mathrm{if} \;i=j=k=l.
		\end{cases}\\
	\end{align*}
	Hence
	\begin{align*}
		\mathsf{R}_{ijkl}\;\mu^{ijkl}&= \frac{3d+12}{2} \sum_{i=1}^d\mathsf{R}_{iiii} +\frac{d+4}{2}\; \sum_{i\neq j}\left(\mathsf{R}_{iijj}+\mathsf{R}_{ijij}+\mathsf{R}_{ijji} \right)\\
		&= \frac{3d+12}{2} \sum_{i=1}^d\mathsf{R}_{iiii}-3\;\frac{d+4}{2} \sum_{i=1}^d\mathsf{R}_{iiii} + \frac{d+4}{2}\; \sum_{i,j}\left(\mathsf{R}_{iijj}+\mathsf{R}_{ijij}+\mathsf{R}_{ijji} \right)\\
		&= \frac{d+4}{2}\sum_{i,j}\left(\mathsf{R}_{iijj}+\mathsf{R}_{ijij}+\mathsf{R}_{ijji} \right)\\
		&=\frac{d+4}{2}\left(\mathsf{R}_{iijj}+\mathsf{R}_{ijij}+\mathsf{R}_{ijji} \right),
	\end{align*}
	where the last equality is just the Einstein summation convention.\\
	$(ii)$ By using the moments of standard Gaussian random variables, we have
	\begin{align*}
		\nu^{ijkl}&= \int_{\mathbb{R}^d} \frac{1}{(2\pi)^{d/2}} \exp\left(-\frac{\vert\vert y\vert\vert^2}{2}\right)Y^iY^jY^kY^l dY\\
		&=\begin{cases}
			0&\mathrm{if}\; \#\{i,j,k,l\}=4 \;\mathrm{or}\;\#\{i,j,k,l\}=3\\&\mathrm{or}\;\left(\#\{i,j,k,l\}=2\;\mathrm{and}\;\mathrm{exactly}\;\mathrm{three}\; \mathrm{of}\;i,j,k\;\mathrm{and}\;l\;\mathrm{are\; equal}\right)\\
			1&\mathrm{if}\;(i=j,\;k=l,\;i\neq l)\;\mathrm{or}\;(i=k,\;j=l,
			i\neq l)\;\mathrm{or}\;(i=l,\;j=k,\;i\neq k)\\
			3&\mathrm{if} \;i=j=k=l.
		\end{cases}\\
	\end{align*}
	Then
	\begin{align*}
		R_{ijkl}\;\nu^{ijkl}&= 3\;\sum_iR_{iiii} + \sum_{i\neq j}\left(R_{iijj}+R_{ijij}+ R_{ijji} \right)\\&= 3\;\sum_iR_{iiii}-3\;\sum_iR_{iiii} + \sum_{i,j}\left(R_{iijj}+R_{ijij}+ R_{ijji} \right)\\
		&= R_{iijj}+R_{ijij}+ R_{ijji},
	\end{align*}
	where the last equality is just the Einstein summation convention.\\
	$(iii)$ By using Lemma \ref{lemma1} (\ref{lemma1:ii}), we have
	\begin{align*}\displaystyle
		\mu^{ij}&= \int_{\mathbb{R}^d} \frac{1}{(2\pi)^{d/2}} \exp\left(-\frac{\vert\vert Y\vert\vert^2}{2}\right)\frac{\vert\vert Y\vert\vert^2}{2}Y^iY^j dY\\
		&=\begin{cases}
			0\;\;\;\mathrm{if}\; i\neq j\\
			\frac{d+2}{2}\;\;\;\mathrm{if} \;i=j.
		\end{cases}\\
	\end{align*}
	Hence
	\begin{align*}
		T_{ij}\;\mu^{ij}&= \frac{d+2}{2}\;\mathrm{Trace}(T). 
	\end{align*}
	
	$(iv)$ By using the moment of standard Gaussian random variables, we have
	\begin{align*}\displaystyle
		\nu^{ij}&= \int_{\mathbb{R}^d} \frac{1}{(2\pi)^{d/2}} \exp\left(-\frac{\vert\vert Y\vert\vert^2}{2}\right)Y^iY^j dY\\
		&=\begin{cases}
			0\;\;\;\mathrm{if}\; i\neq j\\
			1\;\;\;\mathrm{if} \;i=j.
		\end{cases}\\
	\end{align*}
	Hence
	\begin{align*}
		T_{ij}\;\nu^{ij}&= \mathrm{Trace}(T). 
	\end{align*}
	
\end{proof}

	\section{Existence and property  of an expansion of any order}\label{section4}
In this section we are going to prove, for any given positive integer $n$, the existence of a higher order (in $t$) expansion of the form: 
\begin{equation}
\int_{Z}q_Z(t,z,w)\log q_Z(t,z,w) \;d\mathrm{Vol}(w) = -\frac{d}{2}\log(2\pi t) + \sum_{i=0}^{n}c_i(z)t^i + o(t^n),
\end{equation}
where $c_i(z)$'s $(i=0,1,...,n)$ are universal polynomials in the components of the curvature  tensor and its covariant derivatives at $z$.
The main result can be summarized in the following theorem.
\begin{theorem}\label{Higher-expansion}
	Let $z\in (Z,g)$ and let $n\geq 0$. Then
\begin{enumerate}[(1)]
\item \label{thm:part1}  there exist $c_0(z),c_1(z),...,c_n(z) \in \mathbb{R}$ such that
	$$D_{KL}\left(q_Z(t,z,.) \vert\vert \vartheta\right) = -\frac{d}{2}\log(2\pi t) + \log[\mathrm{Vol}(Z)] +\sum_{i=0}^n c_n(z)t^n + o(t^n).$$
	Furthermore, the coefficient $c_i(z)$ $(i=0,1,..,n)$  can be expressed as universal polynomials in the components of the curvature tensor and its covariant derivatives at $z$, when these are expressed in normal coordinates.
\item \label{thm:part2} More precisely, the coefficient $c_i(z)$  $(i = 1, 2, \dots , n)$ is given by the Gaussian integral
	\[
	c_i(z) = \int_{\mathbb{R}^d}\frac{1}{(2\pi)^{\frac{d}{2}}}\exp\left(-\frac{\vert\vert Y\vert\vert^2}{2}\right)\left(-P_i(Y)\frac{\vert\vert Y\vert\vert^2}{2} + Q_i(Y)\right)dY
	\]
	where $P_i(Y)$ and $Q_i(Y)$ are polynomials in $Y$. The coefficients of $P_i(Y)$ and $Q_i(Y)$ can be expressed as universal polynomials in the components of the curvature tensor and its covariant  derivatives at $z$. The polynomials $P_i(Y)$ and $Q_i(Y)$ are defined as follow.

Let $u_i(0,y)$ $(i=0,...,n)$  be the $i$th order parametrix of $Z$ (cf. Equation (\ref{parametrix}) expressed in terms of the normal coordinates $y$ at $z$. Then the two polynomials $P_i(Y)$ and $Q_i(Y)$ $(i=0,1,...,n)$  can be computed by the following two steps.
\begin{enumerate}[Step 1.]\label{Methode}
	\item \label{step1} Sum up all the terms $T$ of the Taylor series at $(t=0,y=0)$ of the function
	$$\displaystyle F: (t,y)\in \mathbb{R}\times B_{\epsilon}(0)\longmapsto \sqrt{\mathrm{det}[g(y)]}\;[u_0(0,y)+tu_1(0,y) + \dots + t^nu_n(0,y)] $$ such that 
	\begin{itemize}
		\item $T$ is of degree $p$ in $t$ and $T$ is of degree $q$ in the components of $y$ (with $q$ even),
		\item $p$ and $q$ are integers such that $i=p+\frac{q}{2},$
	\end{itemize}
	and let $F_i(t,y)$ be the obtained quantity.\\
	Do the same procedure for the function $$\displaystyle G: (t,y)\in \mathbb{R}\times B_{\epsilon}(0)\longmapsto F(t,y)\;\log[u_0(0,y)+tu_1(0,y) + \dots + t^nu_n(0,y)],$$
	and let $G_i(t,y)$ be the obtained quantity.

	\item \label{step2} Replace $y$ with $\sqrt{t}Y$ in the expressions of $F_i(t,y)$ and $G_i(t,y)$  so that we obtain two polynomials $P_i(Y),Q_i(Y)\in \mathbb{R}[Y]$ satisfying
	\begin{align*}
		t^iP_i(Y) &= F_i(t,\sqrt{t}Y)\\
		t^iQ_i(Y)&= G_i(t,\sqrt{t}Y).
	\end{align*}	
\end{enumerate}

\end{enumerate}
\end{theorem}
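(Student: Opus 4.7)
The plan is to start from Equation (\ref{entropy_formula}), restrict the integration to the normal-coordinate ball $U$ via Lemma \ref{exponential_estimate}, and then substitute the parametrix expansion from Theorem \ref{th:kannai} into the integrand $q_Z \log q_Z$. After changing from Riemannian volume $d\mathrm{Vol}(w)$ to normal coordinates $dy$, the density picks up a factor $\sqrt{\mathrm{det}[g(y)]}$. Writing
$$\log q_Z(t,z,w) = -\tfrac{d}{2}\log(2\pi t) - \tfrac{\vert\vert y\vert\vert^2}{2t} + \log\bigl[\mathcal{U}_n(t,y) + R_n(t,z,w)\bigr],$$
with $\mathcal{U}_n(t,y) := \sum_{i=0}^n u_i(0,y)\, t^i$, splits the integral into three pieces. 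The first contributes $-\tfrac{d}{2}\log(2\pi t)$ after noting that $\int_U q_Z\, d\mathrm{Vol} = 1 + o(t^\infty)$ by the exponential tail bound (\ref{exp_decay_kannai}) and the sub-polynomial growth of $\log t$. The remaining two pieces are integrals against $q_Z\sqrt{\mathrm{det}[g(y)]}$ of the functions $-\vert\vert y\vert\vert^2/(2t)$ and $\log[\mathcal{U}_n + R_n]$, respectively.

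For those, substitute $y = \sqrt{t}\,Y$: the Gaussian factor becomes $(2\pi)^{-d/2}\exp(-\vert\vert Y\vert\vert^2/2)$, the Jacobian $t^{d/2}$ cancels the $(2\pi t)^{-d/2}$ prefactor, and the ball $B_\epsilon(0)$ becomes $B_{\epsilon/\sqrt{t}}$, which Lemma \ref{lemma1}(\ref{lemma1:iii}) lets us extend to all of $\mathbb{R}^d$ modulo $o(t^n)$ once the integrand has been expanded to a polynomial in $Y$. Next, Taylor-expand the smooth factors $\sqrt{\mathrm{det}[g(y)]}$, each $u_i(0,y)$, and $\log \mathcal{U}_n(t,y)$ in $y$ about the origin; by Proposition \ref{smoothness} together with the standard expansion of $\log(1+\cdot)$ about the normalization $u_0(0,0)=1$, every coefficient in these expansions is a universal polynomial in the components of the curvature tensor and its covariant derivatives at $z$. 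After the rescaling, a monomial of degree $p$ in $t$ and degree $q$ in $y$ becomes one of weight $t^{p+q/2}$ times $Y^{q}$, and Gaussian integration kills all odd-$q$ contributions by Lemma \ref{simpleLemma}. Collecting the terms with $p+q/2=i$ and $q$ even from $F(t,y)=\sqrt{\mathrm{det}[g(y)]}\,\mathcal{U}_n(t,y)$ and from $G(t,y)=F(t,y)\log \mathcal{U}_n(t,y)$ recovers exactly the quantities $F_i$ and $G_i$ of Step \ref{step1}, whose rescaled forms $P_i(Y)$ and $Q_i(Y)$ produce the claimed Gaussian formula for $c_i(z)$.

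The main technical obstacle will be controlling the error terms uniformly after the $\sqrt{t}$-rescaling. The parametrix remainder $R_n$ is $o(t^n)$ uniformly on $K\times K$ by Theorem \ref{th:kannai}, but it sits inside a $\log$ and is multiplied by the Gaussian prefactor; writing $\log[\mathcal{U}_n + R_n]=\log \mathcal{U}_n + \log[1+R_n/\mathcal{U}_n]$ and using that $\mathcal{U}_n\to u_0(0,0)=1$ as $(t,y)\to(0,0)$ together with $\vert\log(1+x)\vert\le 2\vert x\vert$ near $0$ ensures the $R_n$-contribution remains $o(t^n)$ after integration. Truncating the Taylor expansions of $\sqrt{\mathrm{det}[g(y)]}$, $u_i(0,\cdot)$, and $\log \mathcal{U}_n$ at order $N>2n$ yields an error bounded by $C\vert\vert y\vert\vert^N = C\,t^{N/2}\vert\vert Y\vert\vert^N$, which after Gaussian integration has finite $Y$-moments by Lemma \ref{lemma1}(\ref{lemma1:iv}) and therefore contributes $O(t^{N/2})=o(t^n)$. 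Once all remainders are shown to be $o(t^n)$, comparison of coefficients identifies $c_i(z)$ with the stated Gaussian integral, and the universality of the coefficients of $P_i$ and $Q_i$ follows directly from Proposition \ref{smoothness}, proving both parts (\ref{thm:part1}) and (\ref{thm:part2}).
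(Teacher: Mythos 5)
Your proposal is correct and follows essentially the same route as the paper: restrict to the normal ball via Lemma \ref{exponential_estimate}, split $\log q_Z$ into the three pieces $-\frac{d}{2}\log(2\pi t)$, $-\|y\|^2/(2t)$, and $\log[\mathcal{U}_n+R_n]$, rescale by $y=\sqrt{t}\,Y$, extend to $\mathbb{R}^d$ by Lemma \ref{lemma1}(\ref{lemma1:iii}), kill odd monomials by Lemma \ref{simpleLemma}, and invoke Proposition \ref{smoothness} for universality of the coefficients. Your treatment of the remainder ($\log[1+R_n/\mathcal{U}_n]=o(t^n)$ using $u_0(0,0)=1$) and of the Taylor truncation error matches the paper's handling of its terms $K_1$, $K_2$, $K_3$.
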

\begin{proof}
We are going to prove (\ref{thm:part1}) and (\ref{thm:part2}) of Theorem \ref{Higher-expansion} at the same time.
Recall that from Lemma \ref{exponential_estimate}, it is enough to compute the entropy in a normal neighborhood $U:= B_{\epsilon}(z)$ of $z$, so that we can use the parametrix expansion in (\ref{parametrix}).

By using the normal coordinates $y\in \mathbb{R}^d$ that correspond to the normal neighborhood $U$ of $z$, we have the integral on $U$ of $q_Z(t,z,w)\log q_Z(t,z,w)$ with respect to $d\mathrm{Vol}(w)$
\begin{align*}
\int_U & q_Z(t, z, w)\log q_Z(t, z, w)d\mathrm{Vol}(w)\\
&=  \int_U\frac{1}{(2\pi t)^{\frac{d}{2}}}\exp\left(-\frac{\vert\vert y\vert\vert^2}{2t}\right)\left(\sum_{i=0}^nu_i(0,y)t^i +o(t^n)\right)\sqrt{\mathrm{det}(g)} \log q_Z(t,0,y)dy\\
&=-\frac{d}{2}\log(2\pi t)\int_U\frac{1}{(2\pi t)^{\frac{d}{2}}}\exp\left(-\frac{\vert\vert y\vert\vert^2}{2t}\right)\left(\sum_{i=0}^nu_i(0,y)t^i +o(t^n)\right)\sqrt{\mathrm{det}(g(y))}dy\\
&\quad -  \int_U\frac{1}{(2\pi t)^{\frac{d}{2}}}\exp\left(-\frac{\vert\vert y\vert\vert^2}{2t}\right)\left(\sum_{i=0}^nu_i(0,y)t^i \right)\sqrt{\mathrm{det}(g(y))}  \frac{\vert\vert y\vert\vert^2}{2t}dy\\
&\quad + \int_U\frac{1}{(2\pi t)^{\frac{d}{2}}}\exp\left(-\frac{\vert\vert y\vert\vert^2}{2t}\right)\left(\sum_{i=0}^nu_i(0,y)t^i \right)\sqrt{\mathrm{det}(g(y))} \log \left(\sum_{i=0}^nu_i(0,y)t^i \right)dy\\
&\quad + \int_U\frac{1}{(2\pi t)^{\frac{d}{2}}}\exp\left(-\frac{\vert\vert y\vert\vert^2}{2t}\right)\left(\sum_{i=0}^nu_i(0,y)(0,y)t^i \right)\sqrt{\mathrm{det}(g(y))} \log \left[1+\frac{o(t^n)}{\sum_{i=0}^nu_i(0,y)t^i}\right]dy
\end{align*}
By the fact that $\int_Zq_z(t,z,w)d\mathrm{Vol}(w)=1$ and by the estimate in Equation (\ref{exp_decay_kannai}), we have
\begin{align*}
  \frac{d}{2}\log(2\pi t)&\int_U\frac{1}{(2\pi t)^{\frac{d}{2}}}\exp\left(-\frac{\vert\vert y\vert\vert^2}{2t}\right)\left(\sum_{i=0}^nu_i(0,y)t^i +o(t^n)\right)\sqrt{\mathrm{det}(g(y))}dy \\&= \frac{d}{2}\log(2\pi t) \int_Uq_z(t,z,w)d\mathrm{Vol}(w)\\
  &= \frac{d}{2}\log(2\pi t) + o(t^n).
\end{align*}
Moreover, the quantity $$\log \left[1+\frac{o(t^n)}{\sum_{i=0}^nu_i(0,y)t^i}\right]$$ is well defined since $u_0(0,0)=1$, so that $$\sum_{i=0}^nu_i(0,y)t^i\geq \frac{1}{2}$$
for small values of $t$.

Therefore, we have the following equality
\begin{equation}\label{2ndEualityt}
	\begin{split}
\int_Uq_Z(t,z,w)\log q_Z(t,z,w)d\mathrm{Vol}(w)&=-\frac{d}{2}\log(2\pi t) - K_1(t)+K_2(t)+K_3(t) + o(t^n),
    \end{split}
\end{equation}
where
\begin{equation}\label{K_1}
K_1(t) = \int_U\frac{1}{(2\pi t)^{\frac{d}{2}}}\exp\left(-\frac{\vert\vert y\vert\vert^2}{2t}\right)\left(\sum_{i=0}^nu_i(0, y) t^i \right)\sqrt{\mathrm{det}(g(y))}  \frac{\vert\vert y\vert\vert^2}{2t}dy,
\end{equation}
\begin{equation}\label{K_2}
K_2(t) = \int_U\frac{1}{(2\pi t)^{\frac{d}{2}}}\exp\left(-\frac{\vert\vert y\vert\vert^2}{2t}\right)\left(\sum_{i=0}^nu_i(0, y)t^i \right)\sqrt{\mathrm{det}(g(y))} \log \left(\sum_{i=0}^nu_i(0, y) t^i \right)dy,
\end{equation}
and

\begin{equation}\label{K_3}
K_3(t) = \int_U\frac{1}{(2\pi t)^{\frac{d}{2}}}\exp\left(-\frac{\vert\vert y\vert\vert^2}{2t}\right)\left(\sum_{i=0}^nu_i(0, y)t^i \right)\sqrt{\mathrm{det}(g(y))} \log \left[1+\frac{o(t^n)}{\sum_{i=0}^nu_i(0, y)t^i}\right]dy.
\end{equation}
Throughout this proof, we use the same notation $U$ or $B_{\epsilon}$ for both the neighborhood of $z$ in $Z$ and the neighborhood of $0$ in $\mathbb{R}^d$.  	
	
In order to compute the terms $K_1$, $K_2$ and $K_3$ of Equation (\ref{2ndEualityt}), we are going use the following change of variables

\begin{equation}\label{change_of_variable}
y = \sqrt{t}Y.
\end{equation}
Since $y\in \mathbb{R}^d$, then the Jacobian of this change of variables is just $t^{\frac{d}{2}}$. In addition, we have $Y\in B_{\frac{\epsilon}{\sqrt{t}}}$ because $y\in B_{\epsilon}$.

\begin{enumerate}[(i)]
\item \underline{\textbf{Computation of $K_1(t)$ (\ref{K_1}) and $P_i(Y)$ $(i=0,1,2,...,n)$}}

Consider the function $F: \mathbb{R} \times U\to \mathbb{R}$ defined for $(t,y)\in \mathbb{R}\times U$ by

\begin{equation}\label{functionF}
	F(t,y):= \sqrt{\mathrm{det}[g(y)]}\;\sum_{i=0}^nu_i(0,y)t^i,
\end{equation}
which is $\mathcal{C}^{\infty}$ around $(t,y)=(0,0)$ because $g$ is smooth and non-singular on $U$ and the $u_i$'s are $\mathcal{C}^{\infty}$. Then $F$ has the Taylor expansion at $(0,0)$
  \[F(t,y) = \sum_{i=0}^N \frac{F^{(i)}_{(0,0)} \left[(t,y),...,(t,y)\right]}{i!} + o\left(\mathrm{max}(t^N,\vert\vert y\vert\vert^N)\right),\]
   where $F^{(i)}_{(0,0)}$ is the $i$th derivative of $F$ at $(0,0)$ for $i=0,1,...,N$. If we plug this Taylor expansion into Equation (\ref{K_1}), we have
\begin{align*}
K_1 (t) &= \int_{B_{\epsilon}}\frac{1}{(2\pi t)^\frac{d}{2}}\exp\left(-\frac{\vert\vert y\vert\vert^2}{2t}\right)\sum_{i=0}^N \frac{F^{(i)}_{(0,0)} \left[(t,y),...,(t,y)\right]}{i!}\;\frac{\vert\vert y\vert\vert^2}{2t}dy \\& \quad  + \int_{B_{\epsilon}}\frac{1}{(2\pi t)^\frac{d}{2}}\exp\left(-\frac{\vert\vert y\vert\vert^2}{2t}\right) o\left(\mathrm{max}(t^N,\vert\vert y\vert\vert^N)\right) \frac{\vert\vert y\vert\vert^2}{2t}dy
 \end{align*}

 and by using the change of variables in (\ref{change_of_variable}), we have that
 \begin{align*}
 K_1 (t) &= \int_{B_{\frac{\epsilon}{\sqrt{t}}}}\frac{1}{(2\pi )^\frac{d}{2}}\exp\left(-\frac{\vert\vert Y\vert\vert^2}{2}\right)\sum_{i=0}^N \frac{F^{(i)}_{(0,0)} \left[(t,\sqrt{t}Y),...,(t,\sqrt{t}Y)\right]}{i!}\;\frac{\vert\vert Y\vert\vert^2}{2}dY\\&\quad + \int_{B_{\frac{\epsilon}{\sqrt{t}}}}\frac{1}{(2\pi )^\frac{d}{2}}\exp\left(-\frac{\vert\vert Y\vert\vert^2}{2}\right) o\left(\mathrm{max}(t^N,\vert\vert \sqrt{t}Y\vert\vert^N)\right) \frac{\vert\vert Y\vert\vert^2}{2}dY.
 \end{align*}

For $Y \in B_{\frac{\epsilon}{\sqrt{t}}}$, we have
\begin{align*}
\mathrm{max}(t^N,\vert\vert \sqrt{t}Y\vert\vert^N)=\begin{cases}
\displaystyle t^N & \mathrm{if}\;\; 0\leq \vert\vert Y\vert\vert \leq \sqrt{t}\\
\displaystyle\sqrt{t}^N\vert\vert Y\vert\vert^N &\mathrm{if}\; \sqrt{t}\leq \vert\vert Y\vert\vert\leq  \epsilon/\sqrt{t}.
\end{cases}
\end{align*}
Hence, by taking $N$ large enough, we have
  $$\int_{B_{\frac{\epsilon}{\sqrt{t}}}}\frac{1}{(2\pi )^\frac{d}{2}}\exp\left(-\frac{\vert\vert Y\vert\vert^2}{2}\right) o\left(\mathrm{max}(t^N,\vert\vert \sqrt{t}Y\vert\vert^N)\right) \frac{\vert\vert Y\vert\vert^2}{2}dY=o(t^n)$$
since the moment of any order of a Gaussian random variable is finite.

Therefore
\begin{equation}\label{K_1_simple}
	K_1(t) = \int_{B_{\frac{\epsilon}{\sqrt{t}}}}\frac{1}{(2\pi )^\frac{d}{2}}\exp\left(-\frac{\vert\vert Y\vert\vert^2}{2}\right)\sum_{i=0}^N \frac{F^{(i)}_{(0,0)} \left[(t,\sqrt{t}Y),...,(t,\sqrt{t}Y)\right]}{i!}\;\frac{\vert\vert Y\vert\vert^2}{2}dY + o(t^n).
\end{equation}
Since the derivatives $F^{(i)}_{(0,0)}$ ($i=0,...,N$) are multi-linear forms and by Lemma \ref{lemma1} (\ref{lemma1:iii}), we can compute the integral of Equation (\ref{K_1_simple}) over $\mathbb{R}^d$ for small values of $t$ , i.e.

\begin{equation}\label{K_1finalvalue}
	K_1(t) = \int_{\mathbb{R}^d}\frac{1}{(2\pi )^\frac{d}{2}}\exp\left(-\frac{\vert\vert Y\vert\vert^2}{2}\right)\sum_{i=0}^N \frac{F^{(i)}_{(0,0)} \left[(t,\sqrt{t}Y),...,(t,\sqrt{t}Y)\right]}{i!}\;\frac{\vert\vert Y\vert\vert^2}{2}dY + o(t^n).
\end{equation}
Again, since the derivatives are multi-linear forms, the quantity $$\sum_{i=0}^N \frac{F^{(i)}_{(0,0)} \left[(t,\sqrt{t}Y),...,(t,\sqrt{t}Y)\right]}{i!}$$ is a finite sum of monomials of the form 
$$c_{k,\alpha}(z)t^k(\sqrt{t}Y)^{\alpha},$$
where $\alpha$ is a multi-index, i.e a tuple of positive integers, $c_{k,\alpha} := c_{k,\alpha}(z)\in \mathbb{R}$ and $$(\sqrt{t}Y)^{\alpha}= (\sqrt{t}Y_1)^{\alpha_1}...(\sqrt{t}Y_d)^{\alpha_d}.$$ By Proposition \ref{smoothness}, the coefficients of the Taylor series at $0$ of the functions $y\in U\longmapsto \sqrt{\mathrm{det}[g(y)]}$ and $(t,y)\in \mathbb{R}\times U\longmapsto\sum_{i=0}^{n} u_i(0,y)t^i$ are universal polynomials in the components of the curvature tensor and its covariant derivatives at $z$. Consequently, the coefficients $c_{k,\alpha}$ are also universal polynomials in the components of the curvature tensor and its covariant derivatives at $z$.

Assume now that $\alpha_1+...+\alpha_d$ is odd so that we have a term with a non-integer power of $t$. In this case, there exists at least an index $i_0\in \{0,1,...,d\}$ such that $\alpha_{i_0}$ is odd and the number of such $i_0$'s must also be odd. Hence, for a fixed $t$, the function $$Y\in B_{\frac{\epsilon}{\sqrt{t}}}\longmapsto\frac{1}{(2\pi)^{\frac{d}{2}}}\exp\left(-\frac{\vert\vert Y\vert\vert^2}{2}\right)c_{k,\alpha}t^k(\sqrt{t}Y)^{\alpha}$$ is an odd function in $Y$ on the ball $B_\frac{\epsilon}{\sqrt{t}}$ and its integral on that ball is then $0$ by Lemma \ref{simpleLemma}.

Now, for $i=0,1,2,...,n$ let $F_i(t,\sqrt{t}Y)$ be the sum of all terms $T$ of the quantity $$\displaystyle \sum_{i=0}^N \frac{F^{(i)}_{(0,0)} \left[(t,\sqrt{t}Y),...,(t,\sqrt{t}Y)\right]}{i!}$$ that satisfy the following two condition:
\begin{itemize}
	\item $T$ is of degree $p\in \{0,1,2,...\}$ in $t$ and $T$ is of degree $q\in \{2k:\;k=0,1,2,...\}$ in the components of $\sqrt{t}Y = (\sqrt{t}Y_1,...,\sqrt{t}Y_d)$,
	\item $p$ and $q$ are such that $i = p+\frac{q}{2}$.
\end{itemize}
Then there exist a polynomial $P_i(Y)\in \mathbb{R}[Y]$ such that $$F_i(t,\sqrt{t}Y) = t^i\;P_i(Y),$$
 and we have
\begin{equation}\label{K_1_finalvalue}
\begin{split}
K_1(t) &= \int_{\mathbb{R}^d}\frac{1}{(2\pi )^\frac{d}{2}}\exp\left(-\frac{\vert\vert Y\vert\vert^2}{2}\right)\sum_{i=0}^N \frac{F^{(i)}_{(0,0)} \left[(t,\sqrt{t}Y),...,(t,\sqrt{t}Y)\right]}{i!}\;\frac{\vert\vert Y\vert\vert^2}{2}dY +o(t^n)\\
&=\sum_{i=0}^nt^i\; \int_{\mathbb{R}^d}\frac{1}{(2\pi )^\frac{d}{2}}\exp\left(-\frac{\vert\vert Y\vert\vert^2}{2}\right)P_i(Y)\;\frac{\vert\vert Y\vert\vert^2}{2}dY+ o(t^n).
\end{split}
\end{equation}
 Furthermore, the coefficients of $P_i(Y)$ $(i=0,1,...,d)$ can be expressed as universal polynomials in the components of the curvature tensor and its covariant derivatives at $z$ (cf. Proposition \ref{smoothness}).

\item \underline{\textbf{Computation of $K_2(t)$ (\ref{K_2}) and $Q_i(Y)$ $(i=0,1,2,...,n)$}}

We use exactly the same methods as in the computation of $K_1(t)$. Consider the function $G: \mathbb{R}\times U \to \mathbb{R}$ defined for $(t,y)\in \mathbb{R}\times U$ by
\begin{equation}\label{functionG}
\begin{split}
	G(t,y)&:= \left(\sum_{i=0}^nu_i(0,y)t^i \right)\sqrt{\mathrm{det}(g(y))} \log \left(\sum_{i=0}^nu_i(0,y)t^i \right)\\
	&=F(t,y)\log \left(\sum_{i=0}^nu_i(0,y)t^i \right),
\end{split}
\end{equation}
where $F(t,y)$ is defined in Equation  (\ref{functionF}).

The function $G$ is indeed $\mathcal{C}^{\infty}$ at $(t,y)=(0,0)$ because $(t,y)\in \mathbb{R}\times B_{\epsilon}\longmapsto\sum_{i=0}^nu_i(0,y)t^i$ is smooth, $u_0(0,0)=1$ and $g$ is smooth and non-singular.

Then at $(0,0)$, the function $G$ has the Taylor expansion
$$G(t,y) = \sum_{i=0}^N \frac{G^{(i)}_{(0,0)} \left[(t,y),...,(t,y)\right]}{i!} + o\left(\mathrm{max}(t^N,\vert\vert y\vert\vert^N)\right),$$ where $G^{(i)}_{(0,0)}$ is the $i$th derivative of $G$ at $(0,0)$  for $i=0,...,N$. If we plug this Taylor expansion into Equation (\ref{K_2}), we have
\begin{align*}K_2(t)
	&= \int_{B_{\epsilon}}\frac{1}{(2\pi t)^\frac{d}{2}}\exp\left(-\frac{\vert\vert y\vert\vert^2}{2t}\right)\sum_{i=0}^N \frac{G^{(i)}_{(0,0)} \left[(t,y),...,(t,y)\right]}{i!}\;dy \\
	&\quad+ \int_{B_{\epsilon}}\frac{1}{(2\pi t)^\frac{d}{2}}\exp\left(-\frac{\vert\vert y\vert\vert^2}{2t}\right) o\left(\mathrm{max}(t^N,\vert\vert y\vert\vert^N)\right) dy;
\end{align*}

and by using the change of variables in (\ref{change_of_variable}), we have that
\begin{align*}
	K_2(t) &= \int_{B_{\frac{\epsilon}{\sqrt{t}}}}\frac{1}{(2\pi )^\frac{d}{2}}\exp\left(-\frac{\vert\vert Y\vert\vert^2}{2}\right)\sum_{i=0}^N \frac{G^{(i)}_{(0,0)} \left[(t,\sqrt{t}Y),...,(t,\sqrt{t}Y)\right]}{i!}\;\frac{\vert\vert Y\vert\vert^2}{2}dY\\&\quad + \int_{B_{\frac{\epsilon}{\sqrt{t}}}}\frac{1}{(2\pi )^\frac{d}{2}}\exp\left(-\frac{\vert\vert Y\vert\vert^2}{2}\right) o\left(\mathrm{max}(t^N,\vert\vert \sqrt{t}Y\vert\vert^N)\right) dY.
\end{align*}
Again, since the moment of any order of a Gaussian vector is finite, we have $$\int_{B_{\frac{\epsilon}{\sqrt{t}}}}\frac{1}{(2\pi )^\frac{d}{2}}\exp\left(-\frac{\vert\vert Y\vert\vert^2}{2}\right) o\left(\mathrm{max}(t^N,\vert\vert \sqrt{t}Y\vert\vert^N)\right) dY=o(t^n)$$
by taking $N$ large enough.

Therefore
\begin{equation}\label{K_2_simple}
	K_2(t) = \int_{B_{\frac{\epsilon}{\sqrt{t}}}}\frac{1}{(2\pi )^\frac{d}{2}}\exp\left(-\frac{\vert\vert Y\vert\vert^2}{2}\right)\sum_{i=0}^N \frac{G^{(i)}_{(0,0)} \left[(t,\sqrt{t}Y),...,(t,\sqrt{t}Y)\right]}{i!}\;dY + o(t^n).
\end{equation}
Since the derivatives are multi-linear forms, then by  Lemma \ref{lemma1} (\ref{lemma1:iii}), we can integrate the second member of Equation (\ref{K_2_simple}) over $\mathbb{R}^d$, i.e. 
\begin{equation}\label{K_2finalvalue}
	K_2(t) = \int_{\mathbb{R}^d}\frac{1}{(2\pi )^\frac{d}{2}}\exp\left(-\frac{\vert\vert Y\vert\vert^2}{2}\right)\sum_{i=0}^N \frac{G^{(i)}_{(0,0)} \left[(t,\sqrt{t}Y),...,(t,\sqrt{t}Y)\right]}{i!}\;dY + o(t^n).
\end{equation}
Moreover, the quantity $$\sum_{i=0}^N \frac{G^{(i)}_{(0,0)} \left[(t,\sqrt{t}Y),...,(t,\sqrt{t}Y)\right]}{i!}$$ is a finite sum of monomials of the form 
$$c_{k,\alpha}(z)t^k(\sqrt{t}Y)^{\alpha},$$
where $c_{k,\alpha}:=c_{k,\alpha}(z)\in \mathbb{R}$, $\alpha = (\alpha_1,...,\alpha_d)$ is a multi-index, i.e a tuple of positive integers and $$(\sqrt{t}Y)^{\alpha}= (\sqrt{t}Y_1)^{\alpha_1}...(\sqrt{t}Y_d)^{\alpha_d}.$$

Since $u_0(0,0)=1$, the Taylor series at $(0,0)$ of the function $(t,y)\in \mathbb{R}\times U\longmapsto \sum_{i=0}^{n}u_i(0,y)t^i$ is of the form
$$ 1 + \sum_{i=1}^{\infty} P_i(t,y),$$ where $P_i$ is homogeneous polynomial of degree $i$ in $t,y_1,...,y_d$ for $i>0$ and the coefficients of $P_i$ ($i\in \mathbb{N}$) are universal polynomials in the components of the curvature tensor and its covariant derivatives at $z$ (by Proposition \ref{smoothness}).  Hence, the coefficients of the Taylor series at $(0,0)$ of the function $$(t,y)\in \mathbb{R}\times U\longmapsto \log\left(\sum_{i=0}^{n}u_i(0,y)t^i\right)$$ are also universal polynomial on the components of the curvature tensor and its covariant derivatives at $z$. Hence, by Proposition \ref{smoothness}, all the coefficients $c_{k,\alpha}(z)$ are universal polynomials on the component of the curvature and its covariant derivatives at $z$.

Assume now that $\alpha_1+...+\alpha_d$ is odd so that we have a term with  non-integer power of $t$ in the integral (\ref{K_1_simple}). In this case, there exist at least an index $i_o\in \{0,1,...,d\}$ such that $\alpha_{i_0}$ is odd and the number of such $i_o$'s must also be odd. Hence, for a fixed $t$, the function $$Y\in B_{\frac{\epsilon}{\sqrt{t}}}\longmapsto\frac{1}{(2\pi)^{\frac{d}{2}}}\exp\left(-\frac{\vert\vert Y\vert\vert^2}{2}\right)c_{k,\alpha}(z)t^k(\sqrt{t}Y)^{\alpha}$$ is an odd function in $Y$ on the ball $B_\frac{\epsilon}{\sqrt{t}}$ and its integral on that ball is then $0$ by Lemma (\ref{simpleLemma}).

Now, let $G_i(t,\sqrt{t}Y)$ be the sum of all terms $T$ of the quantity $$\displaystyle \sum_{i=0}^N \frac{G^{(i)}_{(0,0)} \left[(t,\sqrt{t}Y),...,(t,\sqrt{t}Y)\right]}{i!}$$
that satisfy the following two conditions:
\begin{enumerate}[1.]
	\item $T$ is of degree $p\in \{0,1,2,...\}$ in $t$ and $T$ is of degree $q\in \{2k:\; k=0,1,2,...\}$ in the components of $\sqrt{t}Y = (\sqrt{t}Y_1,...,\sqrt{t}Y_d)$,
	\item $p$ and $q$ are such that $i=p+\frac{q}{2}$.
\end{enumerate}
Then there exists a polynomial $Q_i(Y)\in \mathbb{R}[Y]$ such that $$G_i(t,\sqrt{t}Y) = t^i\;Q_i(Y),$$ and we have

\begin{equation}\label{K_2_finalvalue}
	\begin{split}
K_2(t)&= \int_{\mathbb{R}^d}\frac{1}{(2\pi )^\frac{d}{2}}\exp\left(-\frac{\vert\vert Y\vert\vert^2}{2}\right)\sum_{i=0}^N \frac{G^{(i)}_{(0,0)} \left[(t,\sqrt{t}Y),...,(t,\sqrt{t}Y)\right]}{i!}\;dY + o(t^n)\\
&= \sum_{i=0}^nt^i\;\int_{\mathbb{R}^d}\frac{1}{(2\pi )^\frac{d}{2}}\exp\left(-\frac{\vert\vert Y\vert\vert^2}{2}\right) Q_i(Y)\;dY + o(t^n).
\end{split}
\end{equation} 
Furthermore,  the coefficients of $Q_i(Y)$ $(i=0,1,...,n)$ can be expressed as universal polynomials on the components of the  curvature tensor and its covariant derivatives at $z$.

\item \underline{\textbf{Computation of $K_3(t)$ (\ref{K_3})}}

Since $u_0(0,0)=1$, then $\sum_{i=0}^nu_i(0,y)t^i\geq \frac{1}{2}$ for small values of $t$. Hence $$\log \left[1+\frac{o(t^n)}{\sum_{i=0}^nu_it^i}\right]=\frac{o(t^n)}{\sum_{i=0}^nu_it^i}=o(t^n).$$  We then have
\begin{equation}\label{K_3_finalvalue}
K_3(t) = o(t^n)
\end{equation} because the function 
$$(t,y)\in \mathbb{R}\times U \longmapsto \left(\sum_{i=0}^nu_i(0,y)t^i \right)\sqrt{\mathrm{det}[g(y)]}\in \mathbb{R}$$
 that we integrate against the Gaussian distribution in (\ref{K_3}) is bounded for small values of $t$.
 
\end{enumerate}
By combining Equation (\ref{2ndEualityt}), Equation (\ref{K_1_finalvalue}), Equation (\ref{K_2_finalvalue}) and  Equation (\ref{K_3_finalvalue}), we have 

\begin{align*}
		\int_Uq_Z(t,z,w)\log q_Z(t,z,w)d\mathrm{Vol}(w)&=\sum_{i=0}^nt^i\; \int_{\mathbb{R}^d}\frac{1}{(2\pi )^\frac{d}{2}}\exp\left(-\frac{\vert\vert Y\vert\vert^2}{2}\right)\left(-P_i(Y)\;\frac{\vert\vert Y\vert\vert^2}{2}+Q_i(Y)\right)dY \\&\quad -\frac{d}{2}\log(2\pi t)+ o(t^n)
\end{align*}
The theorem is then proved by the formula in Equation (\ref{entropy_formula}).

\end{proof}

	\section{Computation of the coefficients $c_n(z)$}\label{lastsection}
In this section, we will describe how the coefficients $c_n(z)$ appearing in Theorem \ref{Higher-expansion} can actually be computed. For this, one needs the Taylor series at $0$ of $\sqrt{\mathrm{det}(g)(y)}$ and of the functions $u_i(0,y)$ ($i=0,1,...$) that appear in the parametrix expansion. The lowest order terms of these expansions have for instance been recorded by Sakai in \cite{sakai1971eigen}, and we will use these results. However, we do think it is insightful to see the general outline for computing these expansions, for which we present a general procedure in Subsection \ref{se:computation-general-procedure}. Afterwards, we compute the coefficient $c_n(z)$ of the asymptotic expansion in Theorem \ref{Higher-expansion} for $n = 0, 1, 2$.

\subsection{Computation of the Taylor series of relevant quantities}\label{se:computation-general-procedure}
In this subsection, we are going to describe how to compute the Taylor series at $y=0$ of $\sqrt{\mathrm{det}(g(y))}$ and of the functions $u_i(0,y)$ ($i=0,1,...$) appearing in the parametrix expansion. These are needed to compute the coefficients $c_i(z)$ $(i=0,1,2,...)$. 

\subsubsection{Taylor series of the metric coefficients $g_{ij}$}

We follow \cite{sakai1971eigen}. We let $y \in \mathbb{R}^d$, $\ell \in (0,\infty)$ and let $\gamma: \mathbb{R} \to M$ be a geodesic from $z$ such that for $k = 1, \dots, d$, its $k$th normal coordinate is
\[
\gamma^k(t) = \frac{y^k}{\ell} t.
\]
Then $\gamma^k(\ell) = y^k$. 
We define $J_i$ as the Jacobi field along the geodesic $\gamma$ with the initial conditions
\[
J_i(0) = 0
\]
and
\[
J_i'(0) = \frac{t}{\ell} \frac{\partial}{\partial y^i}(z).
\]
The Jacobi field is in fact given by
\[
J_i(t) = \frac{t}{\ell} \frac{\partial}{\partial y^i}(\gamma(t)).
\]
We set 
\[
f(t) := g( J_i(t), J_j(t) )
\]

Then $g_{ij}(y)$ equals $f(\ell)$. Therefore, to get the Taylor expansion for $g_{ij}$, it suffices to consider the Taylor expansion for $f$. To obtain this Taylor expansion, we use the Jacobi equation
\[
J_i''(t) = - R(\gamma'(t), J_i(t)) \gamma'(t)
\]
where the left-hand side should be interpreted as the second covariant derivative in the direction $\gamma'(t)$. By taking covariant derivatives of both sides, and (repeatedly) using that $\nabla_{\gamma'(t)} \gamma'(t) = 0$, we get that for all $k = 0, 1, \dots$, 
\[
J^{(k+2)}_i(t) = - \sum_{m=0}^k {k \choose m} R^{(m)}(\gamma'(t), J_i^{(k-m)}(t))\gamma'(t).
\]

\subsubsection{Taylor series of $g^{ij}$}

To get the Taylor series of $g^{ij}$ we use that for $g$ close enough to $I$,
\[
g^{-1} = I + (g - I) + (g - I)^2 + \cdots + (g - I)^n + .... 
\]

\subsubsection{Taylor series of $\det (g(y))$}

To obtain the Taylor series of $\det (g(y))$ we use the Jacobi formula
\[
\frac{d}{dt}\log\left( \det g(\gamma(t))\right) = \mathrm{tr}\left(g^{-1}(\gamma(t)) \frac{d}{dt} g(\gamma(t))\right)
\]
to first get a Taylor series for 
\[
\log (\det(g(\gamma(t))))
\]
after which we exponentiate this series.

\subsubsection{Taylor series of $u_0(0, y)$}

We know that

\[
u_0(0, y) = \left( \det(g(y)) \right)^{-1/4}
\]
and therefore to get the Taylor expansion of $u_0$ we use use the Taylor expansion of the function
\[
x \mapsto (1 + x)^{-1/4}.
\]

\subsubsection{Taylor series of $u_i(0, y)$}

Finally, the recursion formula for $u_{i}(0,y)$ is given by
\[
u_{i}(0, y) = \frac{1}{2} u_0(0, y) \int_0^1 \frac{\Delta u_{i-1}(0, s y)}{u_{0}(0, sy)}s^{i-1}ds
\]

Moreover, because every $u_i$ is smooth, the Taylor expansion up to some order $n$ for $\Delta u_i$ corresponds to the Laplacian of the Taylor expansion for $u_i$ up to order $n+2$.

\begin{remark}\label{importantRemark}
	Recall that we use stochastic normalization in Equation (\ref{heatequation}) (i.e. we use the operator $\frac{1}{2}\Delta$ instead of $\Delta$). Hence, due to the recurrence formula in Equation (1.4) of \cite{sakai1971eigen}, our heat invariant $u_i(0,y)$ ($y\in U$) is $\frac{1}{2^i}$times the one computed by Sakai \cite{sakai1971eigen} for $i\geq 1$, but the quantities $\sqrt{\mathrm{det}(g(y))}$ and $u_0(0, y)$ ($y\in U$) stay the same. 
\end{remark}

\subsection{Example for  $n=0,1$ and $2$}
We are going to compute $c_0(z), c_1(z)$ and $c_2(z)$. Let us first denote the Taylor expansions of $u_0(0,y),u_1(0,y),u_2(0,y)$ and $\sqrt{\mathrm{det}[g(y)]}$ as follows
\begin{equation}\label{taylorparamanddet}
	\begin{split}
	u_0(0,y)&= 1+A_2+A_3+A_4 + O(\vert\vert y\vert\vert^5),\\
	u_1(0,y)&= B_0+B_1+B_2+ O(\vert\vert y\vert\vert^3),\\
	u_2(0,y)&= C_0 + O(\vert\vert y\vert\vert),\\
	\sqrt{\mathrm{det}[g(y)]} & = 1+E_1+E_2+E_3+E_4+ O(\vert\vert y\vert\vert^5),
	\end{split}
\end{equation}
where for all integer $i\geq 0$, the terms $A_i,B_i,C_i$ and $E_i$ are homogeneous polynomials of degree $i$ in the components of $y$, i.e
 \begin{equation}\label{termtaylorexpansionNotation}
 	\begin{split}
 		A_i &= (A_i)_{k_1k_2...k_i}y^{k_1}y^{k_2}...y^{k_i}\\
 		B_i &= (B_i)_{k_1k_2...k_i}y^{k_1}y^{k_2}...y^{k_i}\\
 		C_i &= (C_i)_{k_1k_2...k_i}y^{k_1}y^{k_2}...y^{k_i}\\
 		E_i &= (E_i)_{k_1k_2...k_i}y^{k_1}y^{k_2}...y^{k_i}.\\
 	\end{split}
 \end{equation}
By \cite[Lemma 3.4 and Lemma 3.5]{sakai1971eigen}, we indeed have $A_0=E_0=1$ and $A_1=0.$ Furthermore, we have the following lemma.
\begin{lem}\label{sakaiLemma}
	With notation as in Equation (\ref{taylorparamanddet}) and Equation (\ref{termtaylorexpansionNotation}), we have
	\begin{enumerate}
		\renewcommand{\theenumi}{(\roman{enumi})}
		\item\label{it:comp-1} $\displaystyle E_1=0$
		\item\label{it:comp-2} $\displaystyle(E_2)_{ij}=-\frac{1}{6}\mathsf{Ric}_{ij}(z)$
		\item\label{it:comp-3} $\displaystyle(E_4)_{ijkl}=\frac{1}{144}\left(-\frac{18}{5}\mathsf{Ric}_{ij;kl}(z) + 2\;\mathsf{Ric}_{ij}(z)\mathsf{Ric}_{kl}(z) -\frac{4}{5}\mathsf{R}_{iujv}(z)\mathsf{R}_{kulv}(z) \right)$
		\item\label{it:comp-4} $\displaystyle(A_2)_{ij} = \frac{1}{12}\mathsf{Ric}_{ij}(z),$
		\item\label{it:comp-5} $\displaystyle(A_4)_{ijkl} = \frac{1}{24}\left(\frac{3}{10}\mathsf{Ric}_{ij;kl}(z)+ \frac{1}{12}\mathsf{Ric}_{ij}(z)\mathsf{Ric}_{kl}(z) + \frac{1}{15}\mathsf{R}_{iujv}(z)\mathsf{R}_{kulv}(z) \right),$
		\item\label{it:comp-6} $\displaystyle B_0 = \frac{1}{12}\mathsf{Sc}(z),$
		\item\label{it:comp-7} $\displaystyle (B_1)_i = \frac{1}{24}\mathsf{Sc}_{;i}(z),$
		\item\label{it:comp-8} $(B_2)_{ij}$ is given by
		\begin{align*}
			\displaystyle (B_2)_{ij} &= \frac{1}{720}\left(9\;\mathsf{Sc}_{;ij}(z)+ 3  \mathsf{Ric}_{ij;uu}(z) +5 \mathsf{Sc}(z)\mathsf{Ric}_{ij}(z)-4\mathsf{Ric}_{iu}(z)\mathsf{Ric}_{ju}(z) \right) \\
			&\quad + \frac{1}{720}\left[2\;\mathsf{Ric}_{uv}(z)\mathsf{R}_{iujv}(z)+2\;\mathsf{R}_{iuvw}(z)\mathsf{R}_{juvw}(z) \right].
		\end{align*}
		\item\label{it:comp-9} $C_0$ is given by
		\begin{align*}
			C_0 & = \frac{1}{1440}\left[9\;\mathsf{Sc}_{;ii}(z)+ 3\;  \mathsf{Ric}_{ii;uu}(z)+ 5\;\mathsf{Sc}^2(z) -4\;\mathsf{Ric}_{iu}(z)\mathsf{Ric}_{iu}(z)  \right]\\
			&\quad + \frac{1}{1440}\left[2\;\mathsf{Ric}_{uv}(z)\mathsf{R}_{iuiv}(z) +2\;\mathsf{R}_{iuvw}(z)\mathsf{R}_{iuvw}(z) \right].
		\end{align*}
	\end{enumerate}
\end{lem}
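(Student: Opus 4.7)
The plan is to follow the general procedure laid out in Subsection \ref{se:computation-general-procedure}, propagating Taylor expansions through the defining algebraic and integral relations, and cross-checking against the formulas recorded in \cite{sakai1971eigen} after correcting for the stochastic normalization noted in Remark \ref{importantRemark}.

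First I would compute the Taylor expansion of $g_{ij}(y)$ in normal coordinates via the Jacobi field method. Setting $f(t) = g(J_i(t), J_j(t))$ with $J_i, J_j$ as in the subsubsection on $g_{ij}$, I iterate the Jacobi equation
\[
J_i''(t) = -R(\gamma'(t), J_i(t)) \gamma'(t)
\]
to express $f^{(k)}(0)$ as a polynomial in the components of the curvature tensor and its covariant derivatives at $z$. The resulting expansion $g_{ij}(y) = \delta_{ij} - \tfrac{1}{3}\mathsf{R}_{ikjl}(z) y^k y^l + O(|y|^3)$ feeds into the Jacobi formula for $\det g$, giving an expansion of $\log \det g(\gamma(t))$; exponentiating and taking the square root yields items \ref{it:comp-1}--\ref{it:comp-3} for $E_1, E_2, E_4$. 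The vanishing of $E_1$ and $E_3$ is automatic because only even-order Jacobi derivatives survive at $t=0$ in normal coordinates.

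Since $u_0(0,y) = (\det g(y))^{-1/4}$, I would then substitute the expansion of $\det g$ into the Taylor series of $(1+x)^{-1/4}$ to obtain items \ref{it:comp-4} and \ref{it:comp-5} for $A_2$ and $A_4$, using only elementary algebraic manipulations on already-computed quantities. The coefficients of $u_1$ and $u_2$ then follow from the recursion
\[
u_i(0,y) = \frac{1}{2} u_0(0, y) \int_0^1 \frac{\Delta u_{i-1}(0, sy)}{u_0(0, sy)} s^{i-1} ds,
\]
where the prefactor $\tfrac{1}{2}$ is exactly what accounts for the $2^{-i}$ discrepancy with Sakai's convention. To get items \ref{it:comp-6}--\ref{it:comp-8} on $B_0, B_1, B_2$, I would expand $u_0$ to sufficiently high order so that $\Delta u_0$ is known up to order $2$ in $|y|$, divide by $u_0$ using a geometric series expansion, and integrate term by term in $s$ (each monomial $s^{i-1+k}$ contributing $1/(i+k)$). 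For item \ref{it:comp-9} on $C_0$, I would repeat the procedure with $u_1$ in place of $u_0$, using the already-computed expansion $u_1 = B_0 + B_1 + B_2 + O(|y|^3)$ and retaining only the constant term after integration.

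The main obstacle is the combinatorial bookkeeping: many intermediate expressions involve contractions of covariant derivatives of $\mathsf{R}$ that must be simplified using the second Bianchi identity, the symmetries $\mathsf{R}_{ijkl} = -\mathsf{R}_{jikl} = \mathsf{R}_{klij}$, and the commutation of covariant derivatives (with resulting curvature corrections) before they match the compact form stated in the lemma. A secondary difficulty is tracking the stochastic normalization factor $1/2$ through every application of the recursion so that the numerical prefactors in $B_0, B_1, B_2, C_0$ come out as written, rather than off by powers of $2$.
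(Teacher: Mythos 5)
Your route is genuinely different from the paper's: the paper proves this lemma purely by citation, reading off $E_1, E_2, E_4, A_2, A_4, B_0, B_1, B_2, C_0$ from Lemma 3.4, Lemma 3.5, Equations (3.7)--(3.8) and Lemma 4.1 of \cite{sakai1971eigen}, combined with the Taylor series of $(1+x)^{1/2}$ and the normalization correction of Remark \ref{importantRemark}, whereas you propose to rederive everything from first principles via the Jacobi-field expansion of $g_{ij}$ and the parametrix recursion. That derivation is exactly the general procedure of Subsection \ref{se:computation-general-procedure} and would in principle work. However, as written your argument is a plan rather than a proof: the entire content of the lemma is the nine specific numerical coefficients, and you never actually execute the expansions far enough to produce (let alone verify) any of the constants $-\tfrac16$, $\tfrac{1}{144}(-\tfrac{18}{5},2,-\tfrac45)$, $\tfrac{1}{24}(\tfrac{3}{10},\tfrac1{12},\tfrac1{15})$, $\tfrac1{12}$, $\tfrac1{24}$, $\tfrac1{720}(\dots)$, $\tfrac1{1440}(\dots)$. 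Obtaining $E_4$ and $(A_4)_{ijkl}$ requires the degree-four Taylor coefficients of $g_{ij}$, i.e.\ the terms involving $\mathsf{R}_{ikjl;mn}$ and the quadratic curvature contractions, and obtaining $B_2$ and $C_0$ requires two passes through the recursion with $\Delta u_0$ and $\Delta u_1$ expanded to second order; none of this bookkeeping is carried out, so the statement is not actually established by your text.

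There is also a concrete error in the one structural claim you do make: you assert that ``the vanishing of $E_1$ \emph{and} $E_3$ is automatic because only even-order Jacobi derivatives survive at $t=0$.'' This is false for $E_3$. Only $f(0), f'(0), f'''(0)$ vanish; $f^{(5)}(0)$ involves $g(J_i^{(4)}(0), J_j'(0))$ with $J_i^{(4)}(0) = -2R'(\gamma'(0), J_i'(0))\gamma'(0) \neq 0$ in general, so $g_{ij}$, $\det g$ and $\sqrt{\det g}$ all have nonzero cubic terms; indeed $E_3 = -\tfrac1{12}\mathsf{Ric}_{ij;k}(z)\,y^iy^jy^k$ generically. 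The lemma happens to make no claim about $E_3$ (and the odd-degree terms are later killed by parity in the Gaussian integrals, cf.\ Lemma \ref{simpleLemma}), so this does not contradict any stated item, but it indicates that the expansion has not actually been pushed to the order needed for items \ref{it:comp-3} and \ref{it:comp-5}. To repair the proposal you should either carry the computation through explicitly to fourth order, or do what the paper does and anchor each coefficient to the corresponding formula in \cite{sakai1971eigen}, applying the factor $2^{-i}$ of Remark \ref{importantRemark} to $u_1$ and $u_2$ only.
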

\begin{proof}
The values of $E_1,(E_2)_{ij}$ and $(E_4)_{ijkl}$ can be obtained by combining the Taylor series at $x=0$ of $(1+x)^{\frac{1}{2}}$ and the result of \cite[Lemma 3.4]{sakai1971eigen}.

The values of $(A_2)_{ij}$ and $(A_4)_{ijkl}$ can be obtained by combining \cite[Lemma 3.4]{sakai1971eigen} and Remark \ref{importantRemark}.

The values of $B_0, B_1$ and $B_2$ can be obtained by combining \cite[Equation (3.7) and Lemma 4.1]{sakai1971eigen} and Remark \ref{importantRemark}.

The value of $C_0$ can be obtained by combining \cite[Equation (3.8) and Lemma 4.1]{sakai1971eigen} and Remark \ref{importantRemark}.
\end{proof}

\subsubsection{Computation of $c_0(z)$ (case $n=0$)}
\begin{itemize}
\item \textbf{Step 1.}  By (\ref{thm:part2}) of Theorem \ref{Higher-expansion}, we need the functions
$$F(t,y)=\sqrt{\mathrm{det}[g(y)]}\;u_0(0,y)$$ and $$G(t,y) = F(t,y)\log[u_0(0,y)].$$ 
Here we only need the constant terms of the Taylor series of $F$ and $G$, i.e:
$$F_0(t,y) = F(0,0) = 1$$ and $$G_0(t,y) = G(0,0) = 0.$$

\item \textbf{Step 2.} Since $F_0(t,y)$ and $G_0(t,y)$ are constants, then there is no variable to replace as in the second step in part (\ref{thm:part2}) of Theorem \ref{Higher-expansion}.

\item \textbf{Step 3.} Since $P_0(Y) = 1$ and $Q_0(Y)=0,$ we have by (\ref{thm:part2}) of Theorem \ref{Higher-expansion}
\begin{align*}
c_0(z) &= \int_{\mathbb{R}^d}\frac{1}{(2\pi)^{\frac{d}{2}}}\exp\left(-\frac{\vert\vert Y\vert\vert^2}{2}\right)\left(-\frac{\vert\vert Y\vert\vert^2}{2} + 0\right)dY\\
&= -\frac{d}{2}.
\end{align*}
Therefore $$\boxed{c_0(z)=-\frac{d}{2}}.$$
\end{itemize}

\subsubsection{Computation of $c_1(z)$ (case $n=1$)}
\begin{enumerate}
\item \textbf{Step 1.} By (\ref{thm:part2}) of Theorem \ref{Higher-expansion}, we need the functions
$$F(t,y) = \sqrt{\mathrm{det}[g(y)]}[u_0(0,y)+ tu_1(0,y)]$$ and $$G(t,y) = F(t,y) \log[u_0(0,y)+ tu_1(0,y)].$$ We need the terms of the Taylor series at $(t=0,y=0)$ of $F$ and $G$ that are of degree zero in $t$ and of degree $2$ in $y$, or of degree one in $t$ and of degree $0$ in $y$. By using the Taylor series in (\ref{taylorparamanddet})5, we have:
\begin{align*}
F(t,y) &= \left(1+E_1+E_2 + O(\vert\vert y\vert\vert^3)\right)\left(1+A_2+ O(\vert\vert y\vert\vert^3) + tB_0 +tO(\vert\vert y\vert\vert)\right)\\
&= 1+A_2+tB_0+E_1+E_2+ ...
\end{align*}
where the remaining terms are either of degree strictly higher than two in $y$ or of degree more than one in both $t$ and $y$.
By using the Taylor expansion of the logarithm,
\begin{align*}
G(t,y) &= F(t,y) \log[u_0(0,y)+ tu_1(0,y)]\\
&= F(t,y)[tB_0+A_2+...]\\
&= -tB_0 -A_2 + ...
\end{align*}
where the remaining terms are either of degree strictly higher than two in $y$ or of degree more than one in both $t$ and $y$. Hence, we have $$F_1(t,y)=A_2+tB_0+E_2$$ and $$G_1(t,y)=-tB_0-A_2.$$
By Lemma 3.5 of \cite{sakai1971eigen}, we have $$\displaystyle A_2 = \frac{1}{12}\mathsf{Ric}_{kh}(z)y^ky^h.$$
By Lemma \ref{sakaiLemma} \ref{it:comp-6}, we have
$$\displaystyle B_0 = \frac{1}{12}\mathsf{Sc}(z).$$
and by Lemma \ref{sakaiLemma} \ref{it:comp-2} we have $$E_2=-\frac{1}{6}\mathsf{Ric}_{kh}(z)y^ky^h.$$

Therefore, we have:
\begin{align*}
F_1(t,y) &= \frac{1}{12}\mathsf{Ric}_{kh}(z)y^ky^h + \frac{t}{12}\mathsf{Sc}(z) -\frac{1}{6}\mathsf{Ric}_{kh}(z)y^ky^h\\
G_1(t,y)&= \frac{t}{12}\mathsf{Sc}(z) + \frac{1}{12}\mathsf{Ric}_{kh}(z)y^ky^h.
\end{align*}
\item \textbf{Step 2.} By doing the change of variables $y\longleftrightarrow \sqrt{t}Y$  [cf. (\ref{thm:part2}) of Theorem \ref{Higher-expansion}] we get
\begin{align*}
F_1(t,y) & = \frac{1}{12}\mathsf{Ric}_{kh}(z)\sqrt{t}Y^k\sqrt{t}Y^h + \frac{t}{12}\mathsf{Sc}(z) -\frac{1}{6}\mathsf{Ric}_{kh}(z)\sqrt{t}Y^k\sqrt{t}Y^h\\
&= \frac{t}{12}\mathsf{Ric}_{kh}(z)Y^kY^h + \frac{t}{12}\mathsf{Sc}(z) -\frac{t}{6}\mathsf{Ric}_{kh}(z)Y^kY^h,
\end{align*}
and 
\begin{align*}
G_1(t,y)&= \frac{t}{12}\mathsf{Sc}(z) + \frac{1}{12}\mathsf{Ric}_{kh}(z)\sqrt{t}Y^k\sqrt{t}Y^h\\
& = \frac{t}{12}\mathsf{Sc}(z) + \frac{t}{12}\mathsf{Ric}_{kh}(z)Y^kY^h.
\end{align*}
Therefore

$$P_1(Y)=\frac{1}{12}\mathsf{Sc}(z) -\frac{1}{12}\mathsf{Ric}_{kh}(z)Y^kY^h $$
and $$Q_1(Y)=\frac{1}{12}\mathsf{Sc}(z) + \frac{1}{12}\mathsf{Ric}_{kh}(z)Y^kY^h.$$

\item \textbf{Step 3.} We are now ready to compute a Gaussian integral to get $c_1(z)$. By using Lemma \ref{lemma1} (\ref{lemma1:ii}), we have
\begin{align*}
\int_{\mathbb{R}^d}\frac{1}{(2\pi)^{\frac{d}{2}}}\exp\left(-\frac{\vert\vert Y\vert\vert^2}{2}\right)P_1(Y)\frac{\vert\vert Y\vert\vert^2}{2} dY &= \frac{d}{24}\mathsf{Sc}(z)- \frac{d+2}{24}\mathsf{Sc}(z)\\
&= -\frac{1}{12}\mathsf{Sc}(z)
\end{align*}
and 
\begin{align*}
	\int_{\mathbb{R}^d}\frac{1}{(2\pi)^{\frac{d}{2}}}\exp\left(-\frac{\vert\vert Y\vert\vert^2}{2}\right)Q_1(Y)dY &= \frac{1}{12}\mathsf{Sc}(z) + \frac{1}{12}\mathsf{Sc}(z)\\
	& = \frac{1}{6} \mathsf{Sc}(z).
\end{align*}
Therefore
$$\boxed{\displaystyle c_1(z) = -\int_{\mathbb{R}^d}\frac{1}{(2\pi)^{\frac{d}{2}}}\exp\left(-\frac{\vert\vert Y\vert\vert^2}{2}\right)P_1(Y)\frac{\vert\vert Y\vert\vert^2}{2} dY+\int_{\mathbb{R}^d}\frac{1}{(2\pi)^{\frac{d}{2}}}\exp\left(-\frac{\vert\vert Y\vert\vert^2}{2}\right)Q_1(Y)dY = \frac{1}{4}\mathsf{Sc}(z)}$$
\end{enumerate}

We have therefore proven rigorously the following theorem, which is Proposition $1$ of \cite{ijcai2020p375}.

\begin{theorem}\label{entropy}
	Let $z\in Z$ and let $\mathsf{Sc}(z)$ denotes the scalar curvature at $z$. Then:
	$$D_{KL}\left(q_Z(t,z,.)\vert\vert \vartheta\right) = -\frac{d}{2}\log(2\pi t) -\frac{d}{2}+\log[\mathrm{Vol}(Z)]+\frac{1}{4}\mathsf{Sc}(z)\;t +o(t),$$
	where $\vartheta$ is the normalized Riemannian volume on $Z$.
\end{theorem}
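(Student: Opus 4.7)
My plan is to derive Theorem \ref{entropy} as a direct specialization of Theorem \ref{Higher-expansion} to the case $n = 1$, which reduces the problem to computing the two coefficients $c_0(z)$ and $c_1(z)$ via the three-step procedure of part (\ref{thm:part2}). With these coefficients in hand, combining them with the formula (\ref{entropy_formula}) for the KL-divergence and the expansion
\[
D_{KL}(q_Z(t,z,\cdot)\Vert \vartheta) = -\tfrac{d}{2}\log(2\pi t) + \log[\mathrm{Vol}(Z)] + c_0(z) + c_1(z)\,t + o(t)
\]
produces exactly the asserted formula. The only real work is the computation of $c_0(z)$ and $c_1(z)$.

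For $c_0(z)$, I would apply Step 1 of the procedure to $F(t,y) = \sqrt{\det g(y)}\,u_0(0,y)$ and $G(t,y) = F(t,y)\log u_0(0,y)$, extracting the pieces of total ``order'' zero (where $t$ counts as order $1$ and $y$ as order $1/2$). Since $u_0(0,0) = 1$ and $\sqrt{\det g(0)} = 1$, one gets $F_0 = 1$ and $G_0 = 0$, so $P_0(Y) \equiv 1$ and $Q_0(Y) \equiv 0$. A trivial Gaussian integral then yields $c_0(z) = -d/2$.

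For $c_1(z)$, Step 1 requires collecting all terms of $F$ and $G$ that are either constant in $t$ and quadratic in $y$, or linear in $t$ and constant in $y$. Using Lemma \ref{sakaiLemma} (which records the relevant Taylor coefficients from \cite{sakai1971eigen}): $E_1 = 0$, $(E_2)_{ij} = -\frac{1}{6}\mathsf{Ric}_{ij}(z)$, $(A_2)_{ij} = \frac{1}{12}\mathsf{Ric}_{ij}(z)$, and $B_0 = \frac{1}{12}\mathsf{Sc}(z)$, one obtains after straightforward multiplication and expansion of $\log(1 + x)$ that
\[
F_1(t,y) = \tfrac{1}{12}\mathsf{Sc}(z)\,t + \bigl(\tfrac{1}{12}\mathsf{Ric}_{kh}(z) - \tfrac{1}{6}\mathsf{Ric}_{kh}(z)\bigr)y^k y^h, \qquad G_1(t,y) = \tfrac{1}{12}\mathsf{Sc}(z)\,t + \tfrac{1}{12}\mathsf{Ric}_{kh}(z)\,y^k y^h.
\]
After the substitution $y = \sqrt{t}\,Y$ of Step 2 and division by $t$, this yields explicit $P_1(Y)$ and $Q_1(Y)$. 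In Step 3, the Gaussian integrals are evaluated using Lemma \ref{lemma1}(\ref{lemma1:ii}) (which gives $\int \tfrac{1}{(2\pi)^{d/2}}e^{-\Vert Y\Vert^2/2}(Y^i)^2\,dY = 1$ and $\int \tfrac{1}{(2\pi)^{d/2}}e^{-\Vert Y\Vert^2/2}\Vert Y\Vert^2(Y^i)^2\,dY = d+2$), together with the identity $\mathsf{Ric}_{kk}(z) = \mathsf{Sc}(z)$. The $\mathsf{Ric}$-contributions combine to give $c_1(z) = \tfrac{1}{4}\mathsf{Sc}(z)$.

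The only nontrivial obstacle here is ensuring the bookkeeping in Step 1 is complete — one must verify that no additional mixed term (e.g.\ a stray piece from the $\sqrt{\det g}\cdot u_0$ product or from the logarithm expansion) contributes to $F_1$ or $G_1$. This is handled cleanly by the homogeneity rule $p + q/2 = i$: any contribution with $q$ odd vanishes under the Gaussian by Lemma \ref{simpleLemma}, and any term with $p + q/2 > 1$ is already absorbed into $o(t)$. Since $A_3$ and $E_3$ are cubic in $y$ and hence odd, and $A_4, E_4, B_1, B_2, C_0$ all give contributions of order strictly greater than one, the truncation $F_1, G_1$ written above is complete and the computation proves the theorem.
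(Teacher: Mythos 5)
Your proposal is correct and follows essentially the same route as the paper: specializing Theorem \ref{Higher-expansion} to $n=1$, computing $c_0(z)=-d/2$ and $c_1(z)=\tfrac14\mathsf{Sc}(z)$ via the same three-step procedure with the same Taylor coefficients $A_2$, $E_2$, $B_0$ from Lemma \ref{sakaiLemma}, and the same Gaussian contractions. The resulting $F_1$, $G_1$, $P_1$, $Q_1$ and the final evaluation match the paper's computation exactly (the paper's intermediate sign on $G_1$ is a typo that its own final formula corrects, consistent with yours).
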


\subsubsection{Computation of $c_2(z)$ (case $n=2$)}
In this last subsection, we are going to prove the following theorem.
\begin{theorem}\label{theorem_c_2}
	With the notations of Theorem \ref{Higher-expansion}, we have:
	\begin{equation}
		\begin{split}
			c_2(z)&= -\frac{3d-22}{480}\;\mathsf{Sc}_{;ii}(z)-\frac{1}{48}\;\mathsf{Ric}_{ij}(z)\mathsf{Ric}_{ij}(z)+\frac{d+6}{80}\;\mathsf{Ric}_{ij;ij}(z)\\
			&\quad -\frac{d-14}{1440}\;\mathsf{R}_{iujv}(z)\mathsf{R}_{iujv}(z)+\frac{d+6}{720}\;\mathsf{R}_{iujv}(z)\mathsf{R}_{juiv}(z).
		\end{split}
	\end{equation}
\end{theorem}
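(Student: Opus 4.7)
The plan is to apply the three-step procedure of Theorem \ref{Higher-expansion}(2) with $i=2$, reducing the computation to a bookkeeping of weight-$2$ terms (where I write $\text{weight} := p + q/2$) from the Taylor expansions recorded in Equation (\ref{taylorparamanddet}) and Lemma \ref{sakaiLemma}.

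In Step 1, I would collect the weight-$2$ parts of
\[
F(t,y) = \sqrt{\det g(y)}\,\bigl[u_0(0,y) + t u_1(0,y) + t^2 u_2(0,y)\bigr]
\]
and of $G(t,y) = F(t,y)\log\bigl[u_0(0,y) + t u_1(0,y) + t^2 u_2(0,y)\bigr]$. Using $A_1 = E_1 = 0$ and expanding, the weight-$2$ components are
\[
F_2 = A_4 + E_4 + E_2 A_2 + t(B_2 + E_2 B_0) + t^2 C_0,
\]
\[
G_2 = A_4 + \tfrac{1}{2}A_2^2 + E_2 A_2 + t(B_2 + A_2 B_0 + E_2 B_0) + t^2\bigl(C_0 + \tfrac{1}{2}B_0^2\bigr).
\]
The expression for $G_2$ comes from $\log(1+h) = h - \tfrac{1}{2}h^2 + \cdots$ with $h := (u_0 - 1) + tu_1 + t^2 u_2$: only the products of (weight-$0$ part of $F$)$\times$(weight-$2$ part of $\log(1+h)$) and (weight-$1$ part of $F$)$\times$(weight-$1$ part of $\log(1+h)$) survive, because the weight-$1/2$ pieces of both factors vanish ($A_1 = E_1 = 0$), $\log(1+h)$ has no constant term, and $h^3$ and higher contribute only at weight $\geq 3$.

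In Step 2, the substitution $y = \sqrt{t}\,Y$ is immediate: since each of $A_k, B_k, E_k$ is homogeneous of degree $k$ in $y$, the substitution produces a common factor $t^2$ in both $F_2(t, \sqrt{t}Y)$ and $G_2(t, \sqrt{t}Y)$, and $P_2(Y), Q_2(Y)$ can be read off directly. In Step 3 I would substitute the explicit formulas from Lemma \ref{sakaiLemma} and compute the Gaussian integrals using Lemma \ref{contractionlemma}: the constant-in-$Y$ pieces ($C_0$ and $B_0^2$) integrate trivially, the quadratic-in-$Y$ pieces ($B_2$, $A_2 B_0$, $E_2 B_0$) contract through $\mu^{ij}$ or $\nu^{ij}$ via Lemma \ref{contractionlemma}(iii)--(iv), and the quartic-in-$Y$ pieces ($A_4, E_4, E_2 A_2, A_2^2$) contract through $\mu^{ijkl}$ or $\nu^{ijkl}$ via Lemma \ref{contractionlemma}(i)--(ii). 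Each quartic contraction against a tensor $\mathsf{R}_{ijkl}$ yields the combination $\mathsf{R}_{iijj} + \mathsf{R}_{ijij} + \mathsf{R}_{ijji}$ (times the dimensional factor $\tfrac{d+4}{2}$ when $\mu$ is used); applying $\mathsf{R}_{iuiv} = \mathsf{Ric}_{uv}$, $\mathsf{Ric}_{ii} = \mathsf{Sc}$, and the symmetry of Ricci, each term reduces to a linear combination of the invariants $\mathsf{Sc}^2, \mathsf{Sc}_{;ii}, \mathsf{Ric}_{ij}\mathsf{Ric}_{ij}, \mathsf{Ric}_{ij;ij}, \mathsf{R}_{iujv}\mathsf{R}_{iujv}, \mathsf{R}_{iujv}\mathsf{R}_{juiv}$.

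The main obstacle is the sheer bookkeeping: the nine product-terms appearing in $F_2$ and $G_2$ must each be contracted, and many of them produce $\mathsf{Sc}^2$ contributions (from $A_4, E_4, E_2 A_2, \tfrac{1}{2}A_2^2, B_2, A_2 B_0, E_2 B_0, C_0, \tfrac{1}{2}B_0^2$). Verifying the claim requires checking that all $\mathsf{Sc}^2$ contributions cancel separately in the integrals against $1$ and against $\|Y\|^2/2$, so that no $\mathsf{Sc}^2$ term survives in the final answer. The dimension-dependent factors $\tfrac{d+4}{2}$ and $\tfrac{d+2}{2}$ from Lemma \ref{contractionlemma}, combined with the rational coefficients from Lemma \ref{sakaiLemma}, then produce the numerators $3d - 22$, $d + 6$, and $d - 14$ appearing in the theorem. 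Once the arithmetic is organized carefully, the coefficients of the six curvature invariants can be read off and matched against the claim.
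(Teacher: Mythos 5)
Your proposal follows exactly the paper's own route: the weight-$2$ collections $F_2$ and $G_2$ you write down agree term-by-term with those in the paper's proof, and the subsequent substitution $y=\sqrt{t}\,Y$ and contraction of the constant, quadratic and quartic pieces via Lemma \ref{contractionlemma} is precisely how the paper evaluates $c_2(z)$ (including the observation that the $\mathsf{Sc}^2$ contributions must, and do, cancel). The only thing not supplied is the final explicit arithmetic with the coefficients of Lemma \ref{sakaiLemma}, which is exactly the bookkeeping the paper carries out in Equations (\ref{termB_0_C_0_squarre})--(\ref{remainingtermofc_2all}); your plan would reproduce it.
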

\begin{proof}
We are going to use the method outlined in Theorem \ref{Higher-expansion}.
\begin{enumerate}
\item \textbf{Step 1.} We have the two functions
$$F(t,y) = \sqrt{\mathrm{det}[g(y)]}[u_0(0,y)+ tu_1(0,y)+ t^2u_2(0,y)]$$ and $$G(t,y) = F(t,y) \log[u_0(0,y)+ tu_1(0,y)+t^2u_2(0,y)].$$ We can get terms in the Taylor series of $F$ and $G$ by using the Taylor expansion in Equations  \ref{taylorparamanddet}: terms without $t$ should be of degree at most $4$ in the components of $y$, terms with $t$ should be of degree at most $2$ in the components of $y$ and terms with $t^2$ should be of degree $0$ in the components of $y$. We have (using that $E_1 = 0$ as stated in Lemma \ref{sakaiLemma} \ref{it:comp-1})
\begin{align*}
F(t,y) &= \left(1+E_2+E_3+E_4+\dots\right)\;\left(1+A_2+A_3+A_4+tB_0+tB_1+tB_2+t^2C_0+\dots \right)\\
&= 1+A_2+A_3+A_4+tB_0+tB_1+tB_2+t^2C_0+ \\
&\quad +E_2+ E_2A_2+tE_2B_0+E_3+E_4+...\\
&= 1+ A_2 + t B_0 + E_2+ A_3 + tB_1+E_3\\&\quad+ A_4 + t B_2+t^2C_0+E_2A_2+  tE_2B_0 +E_4 + \dots
\end{align*}
By using the Taylor series at $0$ of $\log(1+x)$, we get the expansion
\begin{align*}
\log&\left(u_0(0,y)+tu_1(0,y)+t^2u_2(0,y) \right)
\\&= \log\left(1+A_2+A_3+A_4+tB_0+tB_1+tB_2+t^2C_0+\dots \right)\\
&= A_2+A_3+A_4+tB_0+tB_1+tB_2+t^2C_0-\frac{1}{2}(A_2+tB_0+ \dots)^2+\dots\\
&= A_2+A_3+A_4+tB_0+tB_1+tB_2+t^2C_0-\frac{1}{2}A_2^2 -tB_0A_2 -\frac{1}{2}t^2B_0^2+\dots\\
&=A_2 + tB_0+A_3 + tB_1+A_4+tB_2+t^2C_0 -\frac{1}{2}A_2^2 -tB_0A_2 -\frac{1}{2}t^2B_0^2 +\dots
\end{align*}

By combining  this with the expression of $G$, we have
\begin{align*}
G(t,y)&= F(t,y)\log[u_0+tu_1+t^2u_2+ \dots]\\
&= F(t,y) [A_2 + tB_0+A_3 + tB_1+A_4+tB_2+t^2C_0 -\frac{1}{2}A_2^2 -tB_0A_2 -\frac{1}{2}t^2B_0^2+\dots]\\
&= A_2 + tB_0+A_3 + tB_1+A_4+tB_2+t^2C_0 -\frac{1}{2}A_2^2 -tB_0A_2 -\frac{1}{2}t^2B_0^2\\
&\quad + A_2^2 + t A_2 B_0 + t B_0 A_2 + t^2 B_0^2 + E_2 A_2 + t E_2 B_0+ \dots\\
&= A_2 + tB_0+A_3 + tB_1+A_4+tB_2+t^2C_0 + \frac{1}{2}A_2^2 + tB_0A_2 +\frac{1}{2}t^2B_0^2\\
&\quad + E_2 A_2 + t E_2 B_0+ \dots
\end{align*}

Hence, we find
\begin{align*}
	F_2(t,y)&= A_4+tB_2+t^2C_0+E_2A_2+tE_2B_0+ E_4
\end{align*}
and 
\begin{align*}
	G_2(t,y)&=A_4+tB_2+t^2C_0 + \frac{1}{2}A_2^2 + tB_0A_2 +\frac{1}{2}t^2B_0^2 + E_2 A_2 + t E_2 B_0.
\end{align*}
\item \textbf{Step 2.} By using the change of variables $y\longleftrightarrow \sqrt{t}Y$, by using the notations in Equation (\ref{termtaylorexpansionNotation}) we have the two polynomials
\begin{align*}
P_2(Y)&=C_0+(B_2)_{ij}Y^iY^j+B_0(E_2)_{ij}Y^iY^j+ (A_4)_{ijkl}Y^iY^jY^kY^l+(E_2)_{ij}(A_2)_{kl}Y^iY^jY^kY^l\\
&\quad+ (E_4)_{ijkl}Y^iY^jY^kY^l,
\end{align*}
and

\begin{align*}
Q_2(Y)&=C_0 + \frac{1}{2}B_0^2+ (B_2)_{ij}Y^iY^j + B_0(A_2)_{ij}Y^iY^j + B_0(E_2)_{ij}Y^iY^j + (A_4)_{ijkl}Y^iY^jY^kY^l\\
&\quad  + \frac{1}{2}(A_2)_{ij} (A_2)_{kl}Y^iY^jY^kY^l+ (E_2)_{ij}(A_2)_{jk}Y^iY^jY^kY^l.
\end{align*}
\item \textbf{Step 3.} All we need to do now is to compute Gaussian integrals. By using Lemma \ref{contractionlemma}, we find
\begin{align*}
	c_2(z) &= \int_{\mathbb{R}^d}\frac{1}{(2\pi)^{\frac{d}{2}}}\exp\left(-\frac{\vert\vert Y\vert\vert^2}{2}\right)\left(-\frac{\vert\vert Y\vert\vert^2}{2}P_2(Y)+Q_2(Y) \right)dY\\
	&=- \frac{d}{2} C_0 - \frac{d+2}{2} (B_2)_{ii} - \frac{d+2}{2} B_0 (E_2)_{ii}
	  - \frac{d+4}{2} \left[(A_4)_{iijj} + (A_4)_{ijij} + (A_4)_{ijji}\right]\\
	& \quad - \frac{d+4}{2} \left[(E_2)_{ii}(A_2)_{jj} + (E_2)_{ij}(A_2)_{ij}
	  						+(E_2)_{ij}(A_2)_{ji}\right]\\
	& \quad - \frac{d+4}{2} \left[ (E_4)_{iijj} + (E_4)_{ijij} + (E_4)_{ijji} \right]\\
	& \quad+ C_0 + \frac{1}{2}(B_0)^2 + (B_2)_{ii} + B_0(A_2)_{ii} + B_0(E_2)_{ii} +
	  (A_4)_{iijj} + (A_4)_{ijij} + (A_4)_{ijji}\\
	  &\quad + \frac{1}{2} \left[ (A_2)_{ii}(A_2)_{jj} + 
	                              (A_2)_{ij}(A_2)_{ij} +
	                              (A_2)_{ij}(A_2)_{ji} \right]\\
	&\quad  + (E_2)_{ii}(A_2)_{jj} + (E_2)_{ij} (A_2)_{ij} + (E_2)_{ij}(A_2)_{ji}
\end{align*}

which simplifies to
\begin{equation}
\begin{split}
	\label{c_2all}
	c_2(z) &= - \frac{d-2}{2} C_0 - \frac{d}{2} (B_2)_{ii} - \frac{d}{2}B_0(E_2)_{ii} 
			  - \frac{d+2}{2}\left[ (A_4)_{iijj} + (A_4)_{ijij} + (A_4)_{ijji} \right]	\\
	&\quad - \frac{d+2}{2} \left[(E_2)_{ii}(A_2)_{jj} + (E_2)_{ij}(A_2)_{ij}
	+(E_2)_{ij}(A_2)_{ji} \right]\\
	&\quad - \frac{d+4}{2} \left[ (E_4)_{iijj} + (E_4)_{ijij} + (E_4)_{ijji} \right]\\
	&\quad + \frac{1}{2} B_0^2 + B_0(A_2)_{ii} + \frac{1}{2} \left[ (A_2)_{ii}(A_2)_{jj} + 
	(A_2)_{ij}(A_2)_{ij} +
	(A_2)_{ij}(A_2)_{ji} \right].
\end{split}
\end{equation}

\end{enumerate}
Let us compute each term of Equation (\ref{c_2all}) by using the results of Lemma \ref{sakaiLemma}.
\begin{itemize}
\item By \ref{it:comp-6} and \ref{it:comp-9} of Lemma \ref{sakaiLemma} and by using the fact that $\mathsf{R}_{iuiv}= \mathsf{R}_{uivi}= \mathsf{Ric}_{uv}$  \cite[Equation (D.I.4)]{berger1971spectre}, we have
\begin{equation}\label{termB_0_C_0_squarre}
\begin{split}
-\frac{d-2}{2}C_0 + \frac{1}{2}B_0^2&= -\frac{d-2}{2880}\left[9\;\mathsf{Sc}_{;ii}(z)+ 3\;  \mathsf{Ric}_{ii;uu}(z) -4\;\mathsf{Ric}_{iu}(z)\mathsf{Ric}_{iu}(z) +2\;\mathsf{Ric}_{uv}(z)\mathsf{R}_{iuiv}(z) \right]\\
&\quad - \frac{d-2}{2880}\left[ 2\;\mathsf{R}_{iuvw}(z)\mathsf{R}_{iuvw}(z) + 5\;\mathsf{Sc}^2(z)\right] + \frac{1}{288}\mathsf{Sc}^2(z)\\
&= -\frac{d-2}{2880}\left[12\;\mathsf{Sc}_{;ii}(z) -2\;\mathsf{Ric}_{iu}(z)\mathsf{Ric}_{iu}(z)+ 2\;\mathsf{R}_{iuvw}(z)\mathsf{R}_{iuvw}(z) \right]\\
&\quad -\frac{d-4}{576}\;\mathsf{Sc}^2(z)
\end{split}
\end{equation}
\item By using \ref{it:comp-8} of Lemma \ref{sakaiLemma} and by using the fact that $\mathsf{R}_{iuiv}= \mathsf{R}_{uivi}= \mathsf{Ric}_{uv}$  (cf. \cite[Equation (D.I.4)]{berger1971spectre}), we have
\begin{equation}\label{termB_2}
\begin{split}
-\frac{d}{2}(B_2)_{ii}&= -\frac{d}{1440}\left(9\;\mathsf{Sc}_{;ii}(z)+ 3  \mathsf{Ric}_{ii;uu}(z) + 5 \mathsf{Sc}(z)\mathsf{Ric}_{ii}(z)-4\mathsf{Ric}_{iu}(z)\mathsf{Ric}_{iu}(z) \right) \\
&\quad - \frac{d}{1440}\left(2\mathsf{Ric}_{uv}(z)\mathsf{R}_{iuiv}(z)+2\;\mathsf{R}_{iuvw}(z)\mathsf{R}_{iuvw}(z) \right)\\
&= -\frac{d}{1440}\left[12\;\mathsf{Sc}_{;ii}(z)+5\; \mathsf{Sc}^2(z)-2\mathsf{Ric}_{iu}(z)\mathsf{Ric}_{iu}(z) \right]\\
&\quad -\frac{2d}{1440}\;\mathsf{R}_{iuvw}(z)\mathsf{R}_{iuvw}(z)
\end{split}
\end{equation}
\item By using \ref{it:comp-3} and \ref{it:comp-5} of Lemma \ref{sakaiLemma}, we have the term of Equation (\ref{c_2all}) with $A_4$ and $E_4$
\begin{equation}\label{termswithA_4E_4}
\begin{split}
&-\frac{d+2}{2}[(A_4)_{ijij}+(A_4)_{iijj}+(A_4)_{ijji}]-\frac{d+4}{2}[(E_4)_{ijij}+(E_4)_{iijj}+(E_4)_{ijji}]\\
&=-\frac{d+2}{48}\left(\frac{3}{10}\mathsf{Ric}_{ij;ij}(z)+ \frac{1}{12}\mathsf{Ric}_{ij}(z)\mathsf{Ric}_{ij}(z) + \frac{1}{15}\mathsf{R}_{iujv}(z)\mathsf{R}_{iujv}(z) \right)\\
&\quad -\frac{d+2}{48}\left(\frac{3}{10}\mathsf{Ric}_{ii;jj}(z)+ \frac{1}{12}\mathsf{Ric}_{ii}(z)\mathsf{Ric}_{jj}(z) + \frac{1}{15}\mathsf{R}_{iuiv}(z)\mathsf{R}_{jujv}(z) \right)\\
&\quad-\frac{d+2}{48}\left(\frac{3}{10}\mathsf{Ric}_{ij;ji}(z)+ \frac{1}{12}\mathsf{Ric}_{ij}(z)\mathsf{Ric}_{ji}(z) + \frac{1}{15}\mathsf{R}_{iujv}(z)\mathsf{R}_{juiv}(z) \right)\\
&\quad -\frac{d+4}{288}\left(-\frac{18}{5}\mathsf{Ric}_{ij;ij}(z) + 2\;\mathsf{Ric}_{ij}(z)\mathsf{Ric}_{ij}(z) -\frac{4}{5}\mathsf{R}_{iujv}(z)\mathsf{R}_{iujv}(z) \right)\\
&\quad -\frac{d+4}{288}\left(-\frac{18}{5}\mathsf{Ric}_{ii;jj}(z) + 2\;\mathsf{Ric}_{ii}(z)\mathsf{Ric}_{jj}(z) -\frac{4}{5}\mathsf{R}_{iuiv}(z)\mathsf{R}_{jujv}(z) \right)\\
&\quad -\frac{d+4}{288}\left(-\frac{18}{5}\mathsf{Ric}_{ij;ji}(z) + 2\;\mathsf{Ric}_{ij}(z)\mathsf{Ric}_{ji}(z) -\frac{4}{5}\mathsf{R}_{iujv}(z)\mathsf{R}_{juiv}(z) \right)\\
&\overset{\textbf{(*)}}{=} -\frac{d+2}{48}\left(\frac{3}{10}\mathsf{Ric}_{ij;ij}(z)+ \frac{1}{12}\mathsf{Ric}_{ij}(z)\mathsf{Ric}_{ij}(z) + \frac{1}{15}\mathsf{R}_{iujv}(z)\mathsf{R}_{iujv}(z) \right)\\
&\quad -\frac{d+2}{48}\left(\frac{3}{10}\mathsf{Sc}_{;jj}(z)+ \frac{1}{12}\mathsf{Sc}^2(z) + \frac{1}{15}\mathsf{Ric}_{uv}(z)\mathsf{Ric}_{uv}(z) \right)\\
&\quad-\frac{d+2}{48}\left(\frac{3}{10}\mathsf{Ric}_{ij;ij}(z)+ \frac{1}{12}\mathsf{Ric}_{ij}(z)\mathsf{Ric}_{ij}(z) + \frac{1}{15}\mathsf{R}_{iujv}(z)\mathsf{R}_{juiv}(z) \right)\\
&\quad -\frac{d+4}{288}\left(-\frac{18}{5}\mathsf{Ric}_{ij;ij}(z) + 2\;\mathsf{Ric}_{ij}(z)\mathsf{Ric}_{ij}(z) -\frac{4}{5}\mathsf{R}_{iujv}(z)\mathsf{R}_{iujv}(z) \right)\\
&\quad -\frac{d+4}{288}\left(-\frac{18}{5}\mathsf{Sc}_{;jj}(z) + 2\;\mathsf{Sc}^2(z) -\frac{4}{5}\mathsf{Ric}_{uv}(z)\texttt{R}_{uv}(z) \right)\\
&\quad -\frac{d+4}{288}\left(-\frac{18}{5}\mathsf{Ric}_{ij;ij}(z) + 2\;\mathsf{Ric}_{ij}(z)\mathsf{Ric}_{ij}(z) -\frac{4}{5}\mathsf{R}_{iujv}(z)\mathsf{R}_{juiv}(z) \right)\\
&= \frac{d+6}{160}\left(2 \; \mathsf{Ric}_{ij;ij}(z)+\mathsf{Sc}_{;jj}(z) \right)-\frac{23d+78}{1440}\mathsf{Ric}_{ij}(z)\mathsf{Ric}_{ij}(z)\\
&\quad -\frac{5d+18}{576}\mathsf{Sc}^2(z)+\frac{d+6}{720}\left(\mathsf{R}_{iujv}(z)\mathsf{R}_{iujv}(z)+\mathsf{R}_{iujv}(z)\mathsf{R}_{juiv}(z) \right),
\end{split}
\end{equation}
where \textbf{(*)} follows since $\mathsf{Ric}_{ij} = \mathsf{Ric}_{ji}$.
\item By using \ref{it:comp-2}, \ref{it:comp-4} and \ref{it:comp-6} of Lemma \ref{sakaiLemma}, we get for the remaining terms
\begin{equation}\label{remainingtermofc_2all}
\begin{split}
&-\frac{d}{2}B_0(E_2)_{ii} + B_0(A_2)_{ii}-\frac{d+2}{2}\left((E_2)_{ij}(A_2)_{ij}+(E_2)_{ii}(A_2)_{jj}+(E_2)_{ij}(A_2)_{ji} \right)\\
&\quad+\frac{1}{2}\left[ (A_2)_{ii}(A_2)_{jj} + (A_2)_{ij}(A_2)_{ij} + (A_2)_{ij}(A_2)_{ji} \right]\\
&= \frac{d}{144}\mathsf{Sc}^2(z)+ \frac{1}{144}\mathsf{Sc}^2(z)+\frac{d+2}{2}\left(\frac{1}{36}\;\mathsf{Ric}_{ij}(z)\mathsf{Ric}_{ij}(z)+\frac{1}{72}\mathsf{Ric}_{ii}(z)\mathsf{Ric}_{ii}(z) \right)\\
&\quad+\frac{1}{288}\left[ \mathsf{Sc}^2(z) + 2 \; \mathsf{Ric}_{ij}\mathsf{Ric}_{ij} \right]\\
&= \frac{4d+7}{288}\;\mathsf{Sc}^2(z) +\frac{2d+5}{144}\;\mathsf{Ric}_{ij}(z)\;\mathsf{Ric}_{ij}(z).
\end{split}
\end{equation}

\end{itemize}
Finally, by combining Equation (\ref{c_2all}), Equation (\ref{termB_0_C_0_squarre}), Equation (\ref{termB_2}), Equation (\ref{termswithA_4E_4}) and Equation (\ref{remainingtermofc_2all}), we have
\begin{align*}
c_2(z)&= -\frac{d-2}{2880}\left[12\;\mathsf{Sc}_{;ii}(z) -2\;\mathsf{Ric}_{iu}(z)\mathsf{Ric}_{iu}(z)+ 2\;\mathsf{R}_{iuvw}(z)\mathsf{R}_{iuvw}(z) \right]\\
&\quad - \frac{5(d-4)}{2880}\;\mathsf{Sc}^2(z)\\
&\quad -\frac{2 d}{2\cdot1440}\left[12\;\mathsf{Sc}_{;ii}(z)+5\; \mathsf{Sc}^2(z)-2\;\mathsf{Ric}_{iu}(z)\mathsf{Ric}_{iu}(z)+2\;\mathsf{R}_{iuvw}(z)\mathsf{R}_{iuvw}(z) \right]  \\
&\quad+ \frac{18(d+6)}{18 \cdot 160}\left[2\;\mathsf{Ric}_{ij;ij}(z)+\mathsf{Sc}_{;jj}(z) \right]-\frac{2(23d+78)}{2\cdot 1440}\mathsf{Ric}_{ij}(z)\mathsf{Ric}_{ij}(z)\\
&\quad -\frac{5(5d+18)}{5 \cdot 576}\mathsf{Sc}^2(z)+\frac{4(d+6)}{4 \cdot 720}\left(\mathsf{R}_{iujv}(z)\mathsf{R}_{iujv}(z)+\mathsf{R}_{iujv}(z)\mathsf{R}_{juiv}(z) \right)\\
&\quad +\frac{10(4d+7)}{10 \cdot 288}\;\mathsf{Sc}^2(z) +\frac{20(2d+5)}{20 \cdot 144}\;\mathsf{Ric}_{ij}(z)\;\mathsf{Ric}_{ij}(z)\\
&= -\frac{3d-22}{480}\;\mathsf{Sc}_{;ii}(z)-\frac{1}{48}\;\mathsf{Ric}_{ij}(z)\mathsf{Ric}_{ij}(z)+\frac{d+6}{80}\;\mathsf{Ric}_{ij;ij}\\
&\quad -\frac{d-14}{1440}\;\mathsf{R}_{iujv}(z)\mathsf{R}_{iujv}(z)+\frac{d+6}{720}\;\mathsf{R}_{iujv}(z)\mathsf{R}_{juiv}(z),
\end{align*}
and this ends the proof of Theorem \ref{theorem_c_2}.
\end{proof}

	\newpage	
	\renewcommand{\bibname}{References}
	\nocite{*}
	\bibliographystyle{plainnat}
	\bibliography{references}
	\addcontentsline{toc}{section}{References}
	\setcitestyle{square}
	
\end{document}